\newtheorem{thm}{Theorem}[section]
\newtheorem{lem}[thm]{Lemma}
\newtheorem{prop}[thm]{Proposition}
\newtheorem{example}{Example}[section]
\newcommand{\ny}{\hspace*{-2.5mm}}
\newcommand{\hy}{\hspace*{2.5mm}}
\def\Diag{\textup{Diag}}
\newcommand{\tr}{\rm trace}
 \newcommand{\vs}{\vspace*{1mm}}
\newcommand{\ve}{\varepsilon}
\newcommand{\ds}{\displaystyle}
\newcommand{\R}{\mathbb{R}}      
\newcommand{\ZZ}{\mathbb{Z}}      
\newcommand{\cS}{\mathcal{S}}
\journal{Journal of Pure and Applied Algebra}
\begin{document}

\begin{frontmatter}

\title{On sums of squares of $k$-nomials \tnoteref{aaa}}

\author[1]{Jo\~ao Gouveia\corref{a}}
\ead{jgouveia@mat.uc.pt}
\author[1]{Alexander Kova\v cec}
\ead{kovacec@mat.uc.pt}
\author[1]{Mina Saee}
\ead{minasaee@mat.uc.pt}

\address[1]{CMUC, Department of Mathematics, University of Coimbra, Portugal}
\cortext[a]{Corresponding author}
\tnotetext[aaa]{All authors were  supported by Centro de Matem\'atica da Universidade de
   Coimbra -- UID/MAT/00324/2019, funded by the Portuguese Government
   through FCT/MEC and co-funded by the European Regional Development Fund
  through the Partnership Agreement PT2020. JG was partially supported by  FCT under Grant P2020 SAICTPAC/0011/2015. MS was supported by a PhD scholarship from FCT, grant PD/BD/128060/2016.}
\begin{keyword}
factor width \sep sums of squares \sep positive semidefinite \sep  $k$-nomials \sep SDSOS \MSC 13J30 \sep  12D15 \sep 90C30
\end{keyword}

\end{frontmatter}

%





\begin{abstract}
	 In 2005, Boman et al introduced the concept of factor width for a real symmetric positive semidefinite matrix. This is the smallest positive integer $k$ for which the matrix $A$ can be written as $A=VV^T$ with each column of $V$ containing at most $k$ non-zeros. The cones of matrices of bounded factor width give a hierarchy of inner approximations to the PSD cone. In the polynomial optimization context, a Gram matrix of a polynomial having factor width $k$ corresponds to the polynomial being a sum of squares of polynomials of support at most $k$. 	
	Recently, Ahmadi and Majumdar~\cite{Ahm14}, explored this connection for case $k=2$ and proposed to relax the reliance on sum of squares polynomials in semidefinite programming to sum of binomial squares polynomials (sobs; which they call  sdsos), for which semidefinite programming can be reduced to  second order programming  to gain  scalability at the cost of some tolerable loss of precision. With this they tap into the study of sobs that goes back to Reznick~\cite{reznick1987,reznick1989} and Hurwitz~\cite{Hurwitz}.
	In this paper, we will prove some results on the geometry of the cones of matrices with bounded factor widths and their duals, and use them to derive new results on the limitations of certificates of nonnegativity of quadratic forms by sums of $k$-nomial squares using standard multipliers. In particular we will show that they never help for symmetric quadratics, for any quadratic if $k=2$, and any quaternary quadratic if $k=3$. Furthermore we give some evidence that those are a complete list of such cases.
\end{abstract}

\section{Motivation and introduction}

Certifying nonnegativity of polynomials is a hard problem that has a long history in mathematical research. One of its most seminal results is Hilbert's famous result from 1888 \cite{hilbert1888} concerning certifying nonnegativity of polynomials by writing them as sums of squares of other polynomials. More precisely, he showed that every nonnegative degree $2d$ homogeneous polynomial in $n$ variables can be written as a sum of squares if and only if $d=1$, $n=2$ or $(d,n)=(2,3)$. Throughout the 20th century, sums of squares remained an active theoretical topic in real algebraic geometry, with many authors making significant contributions. Of special interest to us will be the result of Reznick from 1995 \cite{Reznick1995}, that states that any positive definite form of degree $2d$ and $n$ variables can be written as a sum of squares after being multiplied by a sufficiently high power of $G_n=(x_1^2+x_2^2+\dots+x_n^2)$. The rise of semidefinite programming at the turn of the millennium allowed the practical computation of sums of squares certificates \cite{Lasserre2001,parrilo2000}. This allowed their use for polynomial optimization purposes and gave an important applied dimension to the topic, giving rise to a number of important contributions to several different problems and dramatically increasing the interest in the area.

Despite all the success stories, the semidefinite programs that arise from sum of squares computations tend to grow exponentially with the degree of the polynomials involved, and even in low degrees do not scale well with dimension. This limits their direct application to large polynomial optimization problems and has given rise in recent years to several alternative proposals of cheaper to compute  certificates.

Ahmadi and Majumdar in their recent paper \cite{Ahm14} propose a new subclass of sums-of-squares polynomials to obviate these shortcomings. Instead of working with the full class of sum-of-squares polynomials they propose to work with polynomials they call diagonally dominant (dsos) or scaled diagonally dominant (sdsos) sums of squares, respectively, obtaining problems that are linear programs (LP) and second order cone programs (SOCP), respectively. We will be mostly interested in the class of sdsos polynomials that can be characterized as the class of polynomials that can be written as sums of squares of binomials, a class whose study goes back to Reznick~\cite{reznick1987,reznick1989} and Hurwitz~\cite{Hurwitz}. Note that since this is a more restrictive certificate than sums of squares, its power is more reduced, so in order to strengthen it and create a more powerful certificate, the authors also proposed to mimic Reznick's technique and consider a version of the certificates where we pre-multiply the target polynomial by some fixed power of $G_n$.

In this paper, we will focus on studying the gains obtained by the use of multipliers in this new family of certificates and their natural generalization: polynomials that can be written as sums of squares of $k$-nomials, a generalization already mentioned in \cite{Ahm14}. We will mostly restrict ourselves to the study of the quadratic case. We can summarize our findings in the following theorem.

\begin{thm}\label{prop:main_result}
	Let $p$ be a quadratic form on $n$-variables. If any of the following holds
	\begin{enumerate}
		\item $p$ is symmetric,
		\item $k=2$,
		\item $n=4$ and $k=3$,
	\end{enumerate}
then there is an $r$ such that $G_n^r p$ is a sum of squared $k$-nomials (so$k$s) if and only if $p$ itself is a so$k$s.
\end{thm}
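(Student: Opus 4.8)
The plan is to translate everything into the language of Gram matrices and factor width, invoke the description of the dual cone of the sums of $k$-nomial squares, and then produce, case by case, a dual certificate living in the higher-degree space.

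Write $p=x^{T}Ax$ with $A$ symmetric. Then $p$ is a so$k$s exactly when $A$ has factor width at most $k$, i.e. $A\in\mathcal{W}_{k}$, the conic hull of the matrices $vv^{T}$ with at most $k$ nonzero entries in $v$; likewise $G_{n}^{r}p$ is a so$k$s exactly when some Gram matrix of it (a symmetric $Q$, indexed by the degree-$(r+1)$ monomials $\mathbf{m}$, with $\mathbf{m}^{T}Q\,\mathbf{m}=G_{n}^{r}p$) lies in the factor-width-$k$ cone of that larger space. Two elementary observations frame the problem: since $G_{n}$ is positive definite, $G_{n}^{r}p\geq0$ forces $A\succeq0$; and multiplying a so$k$s by $G_{n}=\sum_{\ell}x_{\ell}^{2}$ keeps it a so$k$s (replace each square $q^{2}$ by the squares $(x_{\ell}q)^{2}$), so the cones $\mathcal{C}_{r}=\{p:G_{n}^{r}p\text{ is a so}k\text{s}\}$ increase with $r$ and satisfy $\text{so}k\text{s}\subseteq\mathcal{C}_{r}\subseteq\{p\text{ PSD}\}$. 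Thus the theorem asserts that, in the three listed regimes, no PSD form whose Gram matrix lies outside $\mathcal{W}_{k}$ becomes a so$k$s after multiplication by a power of $G_{n}$.

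To prove this I would dualize. The dual cone $\mathcal{W}_{k}^{\ast}$ is the set of symmetric matrices all of whose $k\times k$ principal submatrices are PSD, so $A\in\mathcal{W}_{k}$ iff $\langle M,A\rangle\geq0$ for every such $M$. Hence it suffices to show that every $M\in\mathcal{W}_{k}^{\ast}$ lifts: there is a linear functional $L$ on forms of degree $2r+2$ whose moment matrix $\tilde{M}$ on the degree-$(r+1)$ monomials lies in $\mathcal{W}_{k}^{\ast}$ of the big space (equivalently $L(q^{2})\geq0$ for every $k$-nomial $(r+1)$-form $q$) and with $L(x_{i}x_{j}G_{n}^{r})=M_{ij}$ for all $i,j$; then $\langle M,A\rangle=L(G_{n}^{r}p)\geq0$. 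The last constraint says $M$ is the image of $\tilde{M}$ under the map dual to multiplication by $G_{n}^{r}$: for $r=1$, $M_{ij}=\sum_{\ell=1}^{n}\tilde{M}_{x_{i}x_{\ell},\,x_{j}x_{\ell}}$ exhibits $M$ as a sum of $n$ principal submatrices of $\tilde{M}$, so the real question is the reverse one — can a given $M\in\mathcal{W}_{k}^{\ast}$ always be ``spread out'' over a Hankel-structured $\tilde{M}$ while keeping every small principal submatrix PSD? This is the crux, and where I expect the difficulty to lie. When $p$ is symmetric the whole problem is $S_{n}$-equivariant — symmetrizing a so$k$s decomposition of $G_{n}^{r}p$ keeps it a so$k$s, and on the dual side one may take $M$ and $L$ invariant — so $M=dI+o(J-I)$ ranges over the two-dimensional cone $\{d\geq o,\ d+(k-1)o\geq0\}$ and it is enough to lift its two extreme rays: the ray $d=o$ lifts through the point evaluation $L(f)=f(\mathbf{1})$, and the second ray (which lies outside the PSD cone when $k<n$) is lifted by an explicit invariant functional having essentially one free parameter, the common value on squarefree monomials, which can be tuned so that every small principal submatrix of $\tilde{M}$ becomes PSD. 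For $k=2$ the constraint on $\tilde{M}$ degenerates to ``nonnegative diagonal and $\tilde{M}_{\alpha\beta}^{2}\leq\tilde{M}_{\alpha\alpha}\tilde{M}_{\beta\beta}$'', so no genuine semidefiniteness survives and $\tilde{M}$ can be built directly from any $M$ with $M_{ii}\geq0$, $M_{ij}^{2}\leq M_{ii}M_{jj}$; alternatively one can argue directly on the binomial-square decomposition of $G_{n}^{r}p$, tracking the coefficients of the monomials supported on two variables. For $n=4,\ k=3$ the extreme rays of $\mathcal{W}_{3}^{\ast}$ among $4\times4$ matrices are, by the geometry already developed, the PSD rank-ones together with a single further orbit, so only finitely many matrices must be lifted, which reduces to a finite-dimensional feasibility problem amenable to a (possibly computer-assisted) check.

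The main obstacle throughout is thus the lifting step: producing a moment-structured matrix with prescribed, necessarily small, principal minors. The rigidity of Hankel structure pulls against the flexibility needed to realize an arbitrary element of $\mathcal{W}_{k}^{\ast}$, and it is exactly the three special features — symmetry collapsing the dimension to two, $k=2$ removing all semidefiniteness, and $(n,k)=(4,3)$ being small enough to enumerate — that make the construction go through, which is also why the complementary claim (that these are the only cases) is left as a conjecture.
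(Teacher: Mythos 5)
Your overall strategy is the one the paper actually follows: pass to Gram matrices and factor width, and defeat $G_n^r p$ by exhibiting a dual certificate in the higher-degree space that is \emph{moment-structured} (entries depending only on $\alpha+\beta$), so that its pairing with \emph{any} Gram matrix of $G_n^r p$ collapses, via the coefficient bookkeeping of the multiplier, to a positive multiple of $\langle M, A\rangle$. For the symmetric case your ``invariant functional with one free parameter'' is exactly the paper's certificate (value $k-1$ when $\alpha+\beta$ is even, $-1$ otherwise), whose $k\times k$ principal submatrices are shown psd by a block/determinant computation; for $k=2$ the paper takes your ``alternative'' route (sign-flipping the off-diagonal coefficients in an explicit binomial-square decomposition and reducing to the $M$-matrix criterion), though your direct construction of $\tilde M$ from the $2\times2$-minor conditions is also viable.

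The concrete gap is in case (3). The non-psd extreme rays of $(FW_3^4)^*$ are \emph{not} finitely many matrices up to symmetry: they form a continuous two-parameter family, namely (up to permutation and diagonal scaling) the matrices with unit diagonal and off-diagonal entries $\cos(a)$, $\cos(c)$, $\cos(a-c)$ arranged in a circulant-like pattern, for $a,c\in\,]-\pi,\pi[\,\setminus\{0\}$. So there is no ``finite-dimensional feasibility problem amenable to a computer-assisted check''; the lift must be constructed \emph{uniformly} in the parameters. The paper does this by defining $B'_{\alpha\beta}$ through a parity look-up table: $1$ if $\alpha+\beta$ is all even, $b_{kl}$ if it is odd exactly at positions $k\neq l$, and a free value $\omega$ if it is odd at all four positions. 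Every $3\times3$ principal submatrix of $B'$ is then permutation-equivalent either to a $3\times3$ principal submatrix of $B$ (psd by hypothesis) or to one of three matrices involving $\omega$, and the single choice $\omega=1$ makes all of these psd simultaneously for the whole family. Without this uniform construction (or some compactness/continuity argument replacing it), your case (3) does not close; the rest of the outline, once the two lifts for the symmetric case are actually verified psd, matches the paper's proof.
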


Moreover we provide some evidence that the result fails for $n=5$ and $k=3$, hinting that the result might be a complete characterization of when do multipliers totally fail to help in certifying quadratics.

To prove such results one has to rely on matrix theory. Recall that a degree $2d$ form $p$ on $n$ variables is a sum of squares if and only if it can be written as $p(x)=z(x)_d^T H z(x)_d$ where $z(x)_d$ is the vector of all degree $d$ monomials and $H$ is a positive semidefinite matrix. This simple fact is what makes sums of squares certificates suitable for semidefinite programming. A similar result holds for sums of squared $k$-nomials: A degree $2d$ form $p$ on $n$ variables is so$k$s if and only if it can be written as $p(x)=z(x)_d^T H z(x)_d$ where $z(x)_d$ is the vector of all degree $d$ monomials and $H$ is a matrix with factor width at most $k$. While the geometry of the cone of positive semidefinite matrices is well understood, the cones of bounded factor width matrices are much worse understood, so in order to prove our results we have to start by studying their geometry.
\medskip

We organize the paper as follows:
In Section~\ref{2} we give some basic definitions and notations that will be used throughout the paper. In Section~\ref{3} we present the concept of factor width for positive semidefinite matrices and show how it connects to sums of squares of $k$-nomials. Then in Section~\ref{4} we give some geometric properties of the cone of bounded factor width matrices. In particular we characterize some of the extreme rays of their duals which will be used later  to derive the main results of the paper.

Section~\ref{5} draws inspiration from an example given by Ahmadi and Majumdar in~\cite{Ahm14}. They considered the polynomial $p_n^a=(\sum_{i=1}^n x_{i})^2+(a-1)\sum_{i=1}^n x_{i}^2$ and proved that for $n=3$, if $1 \leq a<2$, $G_n^r p_3^a$ is not a sum of squares of binomials for any $r$, although it is clearly nonnegative for $a\geq 1$. We generalize it, proving that for symmetric quadratic forms Reznick-type multipliers can't help, and thus proving part (1)  of Theorem \ref{prop:main_result}.

In Sections~\ref{6} and \ref{7}, we prove respectively parts (2) and (3) of Theorem \ref{prop:main_result}.  To complete the paper, in Section \ref{8} we give an example which numerically  suggests that our results are complete, as they cannot be extended in the most natural way to five or more variables. To that end, we give a  quadratic form in five variables which is not so$4$s but which becomes  so$4$s after multiplication with $\sum_{i=1}^5 x_{i}^2$.

\section{Definitions and notations} \label{2}

All our matrices are understood to be real.
We denote by $\cS^n,$ the $n\times n$ (real) symmetric matrices. A symmetric matrix $A$ is positive semidefinite (psd) if $x^T Ax\geq 0$ for all $x\in \R^n.$  This property will be denoted by the standard notation $A\succeq 0.$ By $\cS^n_+ $ we denote the subset of real symmetric  positive semidefinite matrices.  {A spectrahedron is a set that has an algebraic representation as the set of $x = (x_1,...,x_m)$ in $\R^m$ } such that $ A_0 + x_1 A_1 +\cdots+ x_m A_m\succeq 0$  where $A_0, . . . , A_m$ are real symmetric matrices. In particular, intersections of affine linear spaces with $\cS^n_+ $ are spectrahedra and every spectrahedron is linearly equivalent to such an intersection if we additionally impose that the $A_i$ must be linearly independent. It can easily be checked that intersections and products of spectrahedra are spectrahedra.

The Frobenius inner product for matrices $A,B\in \cS^n$ is given by $\langle A, B\rangle =\tr (AB{^T})=\sum_{i,j} A_{ij} B_{ij}.$   For a cone $K$ of matrices in $\cS^n$, we define its dual cone $K^*$ as $\{Y\in \cS^n :\langle Y, X\rangle \geq 0,\; \forall X\in K\}$.

For smooth reading the reader should  keep in mind the following basic facts found in texts about convex sets, for example in   \cite{Johnson}, or in \cite[Sections 1.3 and 1.4]{Ramana95}.
\begin{itemize}
\setlength\itemsep{0em}
\item[i.] If $C$ is a closed convex cone  then $C=C^{**}$ {where the dual cone to the cone $C$ is defined as $C^*=\{y: \langle x,y\rangle\geq 0\ \forall x \in C\}$ and $C^{**}=(C^*)^*$.}
\item[ii.]  $\langle A, S_1 BS_2\rangle= \langle S_1{^T} A S_2{^T}, B\rangle,$ whenever the matrix products are defined.
\item[iii.] The cone of real symmetric psd matrices is selfdual, i.e. $\cS_+^n=(\cS_+^n)^*.$
\item[iv.] If $A\in \cS^n_+$ and for some $x\in \R^n,$  $x{^T} Ax=0,$ then $Ax=0.$ See \cite[p. 463]{Johnson}.
\item[v.]  If $A\in \cS^n$ then $A$ is psd iff for  all psd matrices $B,$ $\langle A,B\rangle \geq 0.$ {In particular if $A,B\succeq 0,$ then  $\langle A,B\rangle \geq 0.$}
\item[vi.]  If $A,B\succeq 0,$  then $\langle A,B\rangle=0$ iff  $AB=0.$	
\end{itemize}

If $X=(x_{ij})$ is an $n\times n$ matrix and $K\subseteq \{1,2,...,n\},$ then  $X_K$ denotes the (principal) submatrix of $X$ composed from rows and columns of $X$ with indices in $K$ while $\text{supp}(X)$ denotes the support of $X$, i.e., the set $\{(i,j)\in \{1,2,...,n\}^2: x_{ij}\neq 0\}$ is the support of $X.$ If  $B$ is a $k\times k$ matrix and  $K\subseteq \{1,2,...,n\},$ a $k$ element subset of  $\{1,2,...,n\},$ then $\iota_K(B)$ means the $n\times n$ matrix $X$ which has zeros everywhere, except that $X_K=B.$ \medskip

We denote by $\R[x_{1:n}]=\R[x_1,...,x_n]$  the  algebra of polynomials in $n$ variables $x_1,x_2,\ldots,x_n$ over $\R.$  A {monomial} in $\R[x_{1:n}]$ is an expression
of the form $x^{\alpha}=x_1^{\alpha_1}x_2^{\alpha_2}\cdots x_n^{\alpha_n}$ and a polynomial $p$ in $\R[x_{1:n}]$ is a finite linear combination of monomials; so $p=\sum_{\alpha}c_{\alpha}x^{\alpha}$. A polynomial $p\in \R[x_{1:n}]$ is nonnegative if it takes only nonnegative values, i.e., $p(x)\geq 0,$ for all $x\in \R^n$ and a polynomial $p\in \R[x_{1:n}] $  is a {sum of squares (sos)} polynomial, if it has a representation $p=\sum_{i=1}^m q_i^2$
with  polynomials    $q_i\in \R[x_{1:n}]. $   Of course every sum-of-squares  polynomial is nonnegative and
every nonnegative polynomial has necessarily even degree, $2d,$ say. A useful introduction to polynomial optimization using sums of squares is found in \cite{SDOCAG}.

A {$k$-nomial}  is an expression of the form  $ \alpha_1 m_1+\cdots +\alpha_k m_k$  with $\alpha_1,...,\alpha_k$ reals and $m_1,...,m_k$  monomials.  Note that every $k-1$-nomial is also $k$-nomial. We call a sum of squares of $k$-nomials a {so$k$s}-expression.
A polynomial $p\in P_n$ is then called $r$-{so$k$s} if  $(\sum_{i=1}^nx_i^2)^r  p$ is so$k$s. Note that for $k=2$ these notions correspond to the scaled diagonally dominant sum of squares (SDSOS) and $r$-SDSOS notions introduced in \cite{Ahm14}.

\section{On the factor width of a matrix and sums of $k$-nomial squares}\label{3}	

The concept of  {factor width} of a real {psd} matrix $A$  was introduced by Boman et al.
in ~\cite{boman2005factor} as the   smallest integer $k$ such that  there exists a real
 (rectangular) matrix $V$ such that $A=VV{^T}$ and each column of $V$ contains at most $k$ non-zeros.
We let
\[FW^n_k=\{\text{{psd}}\; n\times n\; \text{matrices of factor width}\leq k\}.\]
We have of course
\[FW_1^n\subset FW_2^n\subset FW_3^n\subset \cdots \subset FW^n_n=\cS_+^n.\]
Next assume $A=VV^T$ is a {psd} matrix  where each column of $V$ has at most $k$
nonzero entries. By the rules of matrix multiplication, for any $i,j \in\{1,...,n\},$   and writing $V_{*\nu}$ and $V_{\nu *}$ for the $\nu$-th column or row of a matrix $V,$ respectively, we have
\[(VV^T)_{ij}=  \sum_{\nu=1}^m V_{i\nu}(V^T)_{\nu j}
=  \sum_{\nu=1}^m (V_{*\nu}V^T_{\hy \nu *})_{ij} =  \sum_{\nu=1}^m (V_{*\nu}V_{ * \nu}^{\hy T} )_{ij}.\]
Write  $A=\sum_{\nu=1}^m  (V_{*\nu}V_{ * \nu}^{\hy T} ).$ Note that each $V_{*\nu}V_{ * \nu}^{\hy T} $ is a {psd}
 $n\times n$ rank $1$ matrix  whose support  lies within a cartesian
product $K^2=K\times K$ for some $K\subseteq \{1,2,...,n\}$ of cardinality $k.$
Since   every $n\times n$ matrix with the latter properties can be written as $vv^T $ for some $v$
with at most $k$ nonzero entries, we have the following
\begin{prop}\label{factor}
	Let $A$ be an $n\times n$  {psd} matrix, and assume $k\in \mathbb{Z}_{\geq 1}.$   Then $A\in FW^n_k $ if and only if $A$ is the sum of  a finite family of {$n\times n$ symmetric  rank one psd matrices} whose  supports are all  contained in sets  $K\times K$ with $|K| = k.$
\end{prop}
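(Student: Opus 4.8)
The plan is to prove the two implications of the equivalence separately; neither is deep, and both amount to bookkeeping around the outer-product expansion already displayed above. Before starting I would dispose of the trivial range $k \ge n$: there $FW^n_k = FW^n_n = \cS^n_+$, and a subset $K \subseteq \{1,\dots,n\}$ of size $k$ must be all of $\{1,\dots,n\}$, so the claim reduces to the spectral decomposition; hence I may assume $1 \le k \le n$.

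For the direction ``$A \in FW^n_k$ $\Rightarrow$ sum of rank one matrices of the stated type'', I would take a factorization $A = VV^T$ with $V$ an $n \times m$ matrix whose columns carry at most $k$ nonzeros, invoke the identity $VV^T = \sum_{\nu=1}^m V_{*\nu}V_{*\nu}^T$ computed above, and discard the zero columns. Each remaining summand $V_{*\nu}V_{*\nu}^T$ is a symmetric rank one psd matrix supported inside $K_\nu \times K_\nu$ for $K_\nu := \{\, i : V_{i\nu} \neq 0 \,\}$, and $|K_\nu| \le k$; since $k \le n$ I can enlarge each $K_\nu$ to size exactly $k$ without shrinking the allowed support, which matches the ``$|K| = k$'' in the statement.

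For the converse I would start from a decomposition $A = \sum_{i=1}^m M_i$ with each $M_i$ symmetric, psd, of rank one, and with $\text{supp}(M_i) \subseteq K_i \times K_i$, $|K_i| = k$. A symmetric rank one matrix is $\pm v v^T$ and positive semidefiniteness forces the plus sign, so $M_i = v_i v_i^T$; reading the diagonal gives $(v_i)_j^2 = (M_i)_{jj} = 0$ for $j \notin K_i$, so $v_i$ is supported on $K_i$ and has at most $k$ nonzero entries. Assembling $V = [\, v_1 \mid \cdots \mid v_m \,]$ yields $A = VV^T$ with every column of $V$ carrying at most $k$ nonzeros, i.e.\ $A \in FW^n_k$.

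I do not expect a genuine obstacle here. The only points needing a line of care are the reconciliation of ``at most $k$ nonzeros per column'' with the stated ``$|K| = k$'' (handled by the padding remark together with the reduction to $k \le n$), and the elementary observation that a rank one psd matrix supported inside $K \times K$ must equal $v v^T$ for a vector $v$ supported on $K$. The substance, such as it is, is the outer-product identity, which has already been recorded above.
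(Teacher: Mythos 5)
Your argument is correct and is essentially the paper's own: the forward direction is the outer-product expansion $VV^T=\sum_\nu V_{*\nu}V_{*\nu}^T$ together with the observation that a column with at most $k$ nonzeros yields a rank-one psd summand supported in some $K\times K$ with $|K|=k$, and the converse assembles the vectors $v_i$ into the columns of $V$. The only additions you make beyond the paper's text are the trivial padding of $K_\nu$ to size exactly $k$ and the dispatch of $k\ge n$, neither of which changes the substance.
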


{For $k=1$, $FW^n_1$ is simply the set of nonnegative diagonal $n \times n$ matrices.
The case $k=2$ corresponds to the cone of scaled diagonally dominant matrices studied in \cite{Ahm14}. That is the set of symmetric matrices $A$ such that there exists a diagonal matrix $D$ with positive diagonal entries such that $DAD$ is diagonally dominant.}

{As mentioned in the introduction, the study of the cones $FW^n_k$ is intimately connected to the study of sums of $k$-nomial squares. The reason can be seen in the next proposition. Denote by $z(x)_d$ the vector of all monomials of degree $d$, arranged in some order, in the variables figuring in $x.$
\begin{prop}\label{factork}
	A homogeneous multivariate polynomial $p(x)$ of degree $2d$ is a sum of $k$-nomial squares (so$k$s) if and only if it can be written in the form
	$ p(x)=z(x)_d^T Q z(x)_d$  with matrix $Q\in FW^{n+d-1 \choose d}_k.$
\end{prop}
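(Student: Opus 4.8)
The plan is to translate the defining property of so$k$s expressions into the Gram-matrix language and then invoke Proposition~\ref{factor}. First I would fix notation: let $N = \binom{n+d-1}{d}$ be the number of degree-$d$ monomials in $x = (x_1,\dots,x_n)$, and let $z = z(x)_d$ be the vector listing them. The key elementary observation is that a single $k$-nomial $q = \alpha_1 m_1 + \cdots + \alpha_k m_k$, where each $m_i$ is a degree-$d$ monomial, can be written as $q = c^T z$ where $c \in \R^N$ is a vector with at most $k$ nonzero entries (one coordinate for each of the $m_i$ appearing, which we may assume distinct). Consequently $q^2 = z^T (c c^T) z$, and $cc^T$ is a symmetric rank-one psd matrix whose support is contained in $K \times K$ for the index set $K$ of the monomials $m_1,\dots,m_k$, so $|K| \le k$ (and we may pad $K$ up to size exactly $k$ without changing anything).

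For the forward direction, suppose $p = \sum_{j} q_j^2$ is a so$k$s expression. Writing each $q_j = c_j^T z$ as above and summing, we get $p(x) = z^T \big(\sum_j c_j c_j^T\big) z$. Setting $Q = \sum_j c_j c_j^T$, Proposition~\ref{factor} gives immediately that $Q \in FW^N_k$, since $Q$ is a finite sum of rank-one psd matrices each supported on some $K \times K$ with $|K| = k$. For the converse, suppose $p(x) = z^T Q z$ with $Q \in FW^N_k$. By Proposition~\ref{factor} we may write $Q = \sum_{\nu} v_\nu v_\nu^T$ with each $v_\nu \in \R^N$ supported on a set $K_\nu \times K_\nu$ of cardinality $k$, i.e.\ each $v_\nu$ has at most $k$ nonzero entries. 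Then $p(x) = \sum_\nu z^T v_\nu v_\nu^T z = \sum_\nu (v_\nu^T z)^2$, and each $v_\nu^T z$ is a linear combination of at most $k$ of the monomials in $z$, hence a $k$-nomial. So $p$ is a sum of $k$-nomial squares.

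The one genuine subtlety — and the step I would be most careful about — is that the correspondence between a polynomial identity and the matrix identity $p(x) = z^T Q z$ is not quite a bijection: the degree-$d$ monomials $z$ satisfy polynomial relations (e.g.\ $x_1 x_2 \cdot x_3 x_4 = x_1 x_3 \cdot x_2 x_4$ among degree-$2$ monomials in four or more variables), so a given $p$ may be represented by many different matrices $Q$. However, this does not affect the argument in either direction: the forward direction only needs the \emph{existence} of one such $Q$ (which we construct explicitly), and the converse only needs that \emph{some} representing $Q$ lies in $FW^N_k$, from which we recover a so$k$s expression by reading off the $v_\nu^T z$. So it suffices to note that the map $q \mapsto c$ sending a $k$-nomial to its coefficient vector on $z$ is well defined once we agree to write each $k$-nomial with its monomials drawn from the list $z$ (collecting coefficients of repeated monomials, which only decreases the number of nonzeros), and that the resulting matrix manipulations are just the bilinearity of $z^T(\cdot)z$. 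With that remark in place the proof is a direct application of Proposition~\ref{factor} in both directions.
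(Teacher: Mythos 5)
Your proof is correct and follows essentially the same route as the paper's: encode each $k$-nomial as a coefficient vector $c$ with at most $k$ nonzeros so that its square is $z^T(cc^T)z$, sum these rank-one matrices for the forward direction, and read off the decomposition $Q=\sum v_\nu v_\nu^T$ from Proposition~\ref{factor} for the converse. Your added remark about the non-uniqueness of the Gram matrix is a sensible clarification but does not change the argument.
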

\begin{proof}
	Consider  an expression  $a_1 m_1+\cdots + a_k m_k$	with reals $a_1,\ldots,a_k$ and monomials $m_1,\ldots,m_k.$  Note that monomials $m_1,\ldots,m_k$ occur
	necessarily in the column $z(x)_d$ at positions $i_1,\ldots,i_k,$ say.   Construct a column $q$ of size
	${n+d-1 \choose d}$  by putting into positions $i_1,\ldots,i_k$ respectively the reals $a_1,\ldots,a_k,$ and into all other positions 0s.  Then evidently $z(x)_d^T  q= a_1 m_1+\cdots + a_k m_k,$  and consequently
	$z(x)_d^T q q^T z(x)_d= (a_1 m_1+\cdots + a_k m_k)^2.$
	Consequently, a polynomial which is a sum of, say, $t$ squares of $k$-nomials can be written as $z(x)_d^T Q z(x)_d ,$
	where $Q=\sum_{\nu=1}^t q_\nu q_\nu^T ,$ with suitable columns $q_1,\ldots,q_t$ of size ${n+d-1 \choose d}$ each of which has at most $k$ nonzero entries. It follows that $Q$ is a matrix of factor width $k.$
	Conversely if $Q$ is of factor width $k,$ then we already know from the beginning of Section 3 that  we can write $Q=\sum_{\nu=1}^t q_\nu q_\nu^T$  where each column $q_\nu$ has at most $k$ nonzero real entries.  Clearly from the arguments above follows now that
	$z(x)_d^T Q z(x)_d$ yields a polynomial which is a finite sum of $k$-nomial squares.
\end{proof}}

Note that by setting $k=n$ we recover the standard fact that a homogeneous degree $2d$ polynomial $p(x)$ is a sum of squares if and only if $p(x)=z(x)_d^T Q z(x)_d$ for some psd matrix $Q$, so we should think of this as a natural refinement of that result.

 {From Proposition \ref{factor}, it follows  that} each set $FW_k^n$ is a convex closed subcone of $\cS^n_+.$ We will now focus on its  dual cone. From \cite[Lemma 5 and Subsection 3.2.5]{Permenter2017} we have the following result.

\begin{prop}\label{dualfactork}
	 The dual  of $FW_k^n$ is  given by
	\[ (FW_k^n)^* = \{X\in \cS^n \; |\; X_K\in \cS^k_+
	\text{ for all}\; K\subseteq \{1,2,...,n\} \text{ with } |K| =k \}.\]
	{where $X_K$ denotes the  principal submatrix of $X$ of support $K\times K$.}
	Furthermore the following inclusions and identity hold
		$$FW_k^n \subseteq \cS_+^n \subseteq (FW_k^n)^*   \text{ and  } FW_k^n=(FW_k^n)^{**}.$$
\end{prop}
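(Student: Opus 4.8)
The statement to prove is Proposition~\ref{dualfactork}, which cites~\cite{Permenter2017}, but I would want a self-contained argument. The plan is to compute $(FW_k^n)^*$ directly from the generating description of $FW_k^n$ provided by Proposition~\ref{factor}, then derive the chain of inclusions from the computed dual, and finally get the bidual identity from the general fact that a closed convex cone equals its bidual.

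\textbf{Step 1: Computing the dual.} By Proposition~\ref{factor}, $FW_k^n$ is the conic hull of the rank-one matrices $vv^T$ where $v$ has support inside some $K$ with $|K|=k$; equivalently it is generated by the matrices $\iota_K(uu^T)$ for $u\in\R^k$ and $K$ ranging over $k$-element subsets of $\{1,\dots,n\}$. A cone and its set of conic generators have the same dual, so $X\in(FW_k^n)^*$ iff $\langle X,\iota_K(uu^T)\rangle\ge 0$ for all such $K$ and all $u\in\R^k$. Using fact~(ii) (or just direct computation), $\langle X,\iota_K(uu^T)\rangle=\langle X_K,uu^T\rangle=u^T X_K u$. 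Hence $X\in(FW_k^n)^*$ iff $u^TX_Ku\ge 0$ for every $u\in\R^k$ and every $k$-subset $K$, i.e. iff every principal $k\times k$ submatrix $X_K$ is psd. This is exactly the claimed description. (One should note $FW_k^n$ spans $\cS^n$ when $k\ge 2$, which guarantees the dual is a genuine cone in $\cS^n$ and there is no subtlety about the ambient space; for $k=1$ the description still holds verbatim.)

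\textbf{Step 2: The inclusions.} The inclusion $FW_k^n\subseteq\cS_+^n$ is immediate from the definition of factor width (a sum of rank-one psd matrices is psd), and was already observed in the excerpt. Dualizing reverses inclusions, so $(\cS_+^n)^*\subseteq(FW_k^n)^*$; by selfduality of $\cS_+^n$ (fact~(iii)) this reads $\cS_+^n\subseteq(FW_k^n)^*$. Alternatively, and just as quickly, any psd $X$ has all principal submatrices psd, which gives $\cS_+^n\subseteq(FW_k^n)^*$ straight from the description in Step~1.

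\textbf{Step 3: The bidual identity.} Each $FW_k^n$ is a convex cone (conic combinations of the generators of Proposition~\ref{factor} stay in the set) and is closed — this was asserted in the line ``From Proposition~\ref{factor}, it follows that each set $FW_k^n$ is a convex closed subcone of $\cS^n_+$''; if one wants to spell it out, closedness follows because $FW_k^n$ is a finite union over $k$-subsets $K$ of the sets $\{\sum_K \iota_K(B_K): B_K\in\cS^k_+\}$... more precisely it is the (Minkowski) sum of finitely many closed cones $\iota_K(\cS^k_+)$, and a finite sum of closed cones each of which meets $-(\text{sum of the others})$ only at $0$ is closed; here positivity makes that intersection trivial. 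Given closedness and convexity, fact~(i) yields $FW_k^n=(FW_k^n)^{**}$ directly. This completes the proof.

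I do not expect any real obstacle here: Step~1 is a routine unwinding of the definition of the dual cone against an explicit set of generators, and Steps~2 and~3 are one-line consequences of selfduality of the psd cone and of the general bipolar theorem for closed convex cones. The only point needing a word of care is the closedness of $FW_k^n$ (needed for the bidual identity); since the relevant cones $\iota_K(\cS^k_+)$ are closed and sit in ``non-opposing'' directions by positive semidefiniteness, their finite Minkowski sum is closed, so fact~(i) applies.
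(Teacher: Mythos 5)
Your proof is correct. Note that the paper itself offers no proof of this proposition: it is quoted from Permenter's thesis (Lemma 5 and Subsection 3.2.5 of \cite{Permenter2017}), so there is no argument in the text to compare yours against. Your Step 1 is the standard dualization against the generators $\iota_K(uu^T)$ supplied by Proposition~\ref{factor}, and Step 2 follows either by order-reversal of duality plus selfduality of $\cS_+^n$ or directly from the computed description; both are fine. The only genuinely delicate point is the closedness of $FW_k^n$ needed to invoke the bipolar theorem in Step 3 (the paper asserts closedness without proof just before stating the proposition), and your argument handles it correctly: $FW_k^n$ is the Minkowski sum of the finitely many closed cones $\iota_K(\cS_+^k)$, and since a sum $\sum_K \iota_K(B_K)=0$ with each $B_K\succeq 0$ forces every $B_K=0$ (take traces), the standard criterion for closedness of a finite sum of closed convex cones applies.
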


In the next Section, we will focus on the study of the geometry of these dual cones, since that will play a fundamental role in proving that certain polynomials have no so$k$s representation.

\section{On the geometry of bounded factor width matrices}\label{4}
In this section, we give some geometric properties of the cone of bounded factor width matrices. In particular, we characterize some of the extreme rays of their duals. Extreme rays of the dual cones will play a fundamental role in our results as they will offer certificates of non-membership in the cones $FW_k^n$, which are crucial to prove that something is not so$k$s.

We will fully characterize in this section the extreme rays of $(FW_{n-1}^{n})^*$ for any $n$. Additionally we will characterize all the extreme rays of $(FW_{k}^{n})^*$ that are spanned by psd matrices and give a very concrete formulation for the extreme rays of $(FW_{3}^{4})^*$.

The first crucial observation is that all extreme rays of  $(FW_{k}^n)^*$ are exposed.
\begin{lem}\label{exposed}
	The cone $(FW_{k}^n)^*$ is (linearly equivalent to) a spectrahedron. Therefore any extreme face of $(FW_{k}^n)^*$ is exposed and in particular its rays.	
\end{lem}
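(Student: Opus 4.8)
The plan is to exhibit $(FW_k^n)^*$ explicitly as (linearly equivalent to) a spectrahedron, and then invoke the standard fact that every extreme face of a spectrahedron is exposed. By Proposition~\ref{dualfactork} we already have the description
\[
(FW_k^n)^* = \{X\in \cS^n \;|\; X_K\in \cS^k_+ \text{ for all } K\subseteq\{1,\dots,n\} \text{ with } |K|=k\}.
\]
First I would note that for a fixed $K$ the condition ``$X_K\succeq 0$'' is, in the entries of $X$, exactly a linear matrix inequality: namely $\sum_{i\le j} x_{ij} A_{ij}^{(K)}\succeq 0$, where $A_{ij}^{(K)}$ is the $k\times k$ symmetric matrix obtained by picking out the $(i,j)$ (and $(j,i)$) positions that lie inside $K\times K$ and zeroing the rest. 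Hence each such constraint cuts out a spectrahedron in $\cS^n\cong\R^{\binom{n+1}{2}}$, and $(FW_k^n)^*$ is the intersection over the finitely many $k$-subsets $K$ of these spectrahedra.

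The second step is to observe that a finite intersection of spectrahedra is again a spectrahedron: one simply stacks the defining matrices block-diagonally, so that $\bigoplus_K \bigl(\sum_{i\le j} x_{ij}A_{ij}^{(K)}\bigr)\succeq 0$ is a single LMI equivalent to the whole system. (This is exactly the remark made in Section~\ref{2} that intersections and products of spectrahedra are spectrahedra.) Thus $(FW_k^n)^*$ is a spectrahedron; more precisely, since the $A_{ij}^{(K)}$ taken together need not be linearly independent, it is at least \emph{linearly equivalent} to an intersection of $\cS_+^N$ with an affine subspace, which is all the statement claims.

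The final step is to quote the geometric input: every face of the PSD cone $\cS_+^N$ is exposed, and exposedness is preserved under intersection with an affine subspace and under linear isomorphism, so every face of a spectrahedron is exposed; applying this to the extreme faces — in particular to the extreme rays — of $(FW_k^n)^*$ gives the claim. I expect no real obstacle here: the only mild point requiring care is making sure the reduction genuinely lands in the ``affine section of $\cS_+^N$'' normal form rather than a general LMI feasible set (handled by the ``linearly equivalent'' clause already built into Lemma~\ref{exposed}), and citing the right source for ``faces of spectrahedra are exposed'' (e.g.\ \cite{Ramana95}). Everything else is bookkeeping with the block-diagonal construction.
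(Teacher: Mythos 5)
Your argument is correct and follows essentially the same route as the paper's proof: express $(FW_k^n)^*$ as the intersection over all $k$-subsets $K$ of the spectrahedra $\{X : X_K \succeq 0\}$, use that finite intersections of spectrahedra are spectrahedra, and invoke the fact (from \cite{Ramana95}) that all faces of a spectrahedron are exposed. The only cosmetic difference is that you phrase each constraint as an explicit LMI while the paper identifies it with $\cS_+^k\times\R^{\binom{n+1}{2}-\binom{k+1}{2}}$; both are fine.
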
	
\begin{proof}
Let $I \subseteq \{1,...,n\}$ and $|I|=k$. Then $S(I)$, the set of symmetric $n\times n$ matrices such that the principal submatrix indexed by $I$ is positive semidefinite, is linearly equivalent to $\cS_{+}^k\times \R^{\binom{n+1}{2}-\binom{k+1}{2}}$ hence a spectrahedron. Since intersections of spectrahedra are spectrahedra,
$(FW_{k}^n)^*$ being the intersection of all such $S(I)$, implies that it is also a spectrahedron. The second part is a consequence of the theorem that every face of a spectrahedron is exposed. This is proved in  \cite[p.11]{Ramana95}.
\end{proof}

Since $S_+^n$ is a subset of $(FW_{k}^n)^*$ it makes sense to ask which rays of $S_+^n$ are extreme rays of $(FW_{k}^n)^*$. Interestingly it turns out that all extreme rays of $S_+^n$ are still extreme in $(FW_{k}^n)^*$.

\begin{lem}\label{extrmerayrank1}
	The matrix $A\in \cS_{+}^n$ spans an extreme ray of $(FW_{k}^n)^*$ if and only if it has rank $1$.
\end{lem}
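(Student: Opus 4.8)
The plan is to prove the two implications separately; the ``only if'' direction is the routine one.

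\medskip
\noindent\textbf{Extreme ray $\Rightarrow$ rank one.} I would argue by contraposition. Suppose $A\in\cS^n_+$ has rank $\geq 2$ and take a spectral decomposition $A=\sum_{i=1}^r\lambda_iu_iu_i^T$ with $\lambda_i>0$, $r\geq 2$ and the $u_i$ orthonormal. Set $B=\lambda_1u_1u_1^T$ and $C=A-B=\sum_{i=2}^r\lambda_iu_iu_i^T$. Both summands are psd, hence both lie in $(FW_k^n)^*$ by Proposition~\ref{dualfactork}, and a rank count shows that neither is a nonnegative multiple of $A$: if $B=tA$ then $t=0$ (since $\operatorname{rank}B<\operatorname{rank}A$), contradicting $B\neq 0$, and if $C=tA$ then $B=(1-t)A$ and the same contradiction reappears. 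Hence $A$ does not span an extreme ray of $(FW_k^n)^*$; since $A=0$ is excluded, an extreme ray must have rank exactly one.

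\medskip
\noindent\textbf{Rank one $\Rightarrow$ extreme ray.} Write $A=vv^T$ with $v\neq 0$ and suppose $A=B+C$ with $B,C\in(FW_k^n)^*$; I must show $B$ (and hence $C$) is a nonnegative multiple of $A$. The key idea is to pass to principal $k\times k$ submatrices, where membership in $(FW_k^n)^*$ becomes ordinary positive semidefiniteness. For each $K\subseteq\{1,\dots,n\}$ with $|K|=k$ we have $A_K=v_Kv_K^T=B_K+C_K$ with $B_K,C_K\in\cS^k_+$. If $v_K=0$ then $B_K=C_K=0$. If $v_K\neq 0$, then $A_K$ is a nonzero rank-one psd matrix, hence spans an extreme ray of the self-dual cone $\cS^k_+$, so $B_K=\beta_Kv_Kv_K^T$ with $\beta_K=B_{ii}/v_i^2\in[0,1]$ for any $i\in K$ with $v_i\neq 0$.

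\medskip
\noindent The rest is bookkeeping on $S=\operatorname{supp}(v)$. First, if $v_i=0$ then for every $k$-set $K\ni i$ the matrix $A_K$ has vanishing $i$-th row, hence so does $B_K$ by the previous step; since (for $2\le k\le n$) every index lies in a common $k$-set with $i$, the $i$-th row of $B$ vanishes, so $\operatorname{supp}(B)\subseteq S\times S$. Now two cases. If $|S|\le k$, enlarge $S$ to a $k$-set $K$; then $B=\iota_K(B_K)=\beta_K\,\iota_K(v_Kv_K^T)=\beta_Kvv^T$. If $|S|>k$, then for every $k$-set $K\subseteq S$ we have $v_K\neq 0$ and $\beta_K=B_{ii}/v_i^2$ for each $i\in K$; since any two elements of $S$ lie in a common $k$-subset of $S$, all these $\beta_K$ equal a single $\beta$, and reading off entries gives $B_{ij}=\beta v_iv_j$ for $i,j\in S$ and $B_{ij}=0=\beta v_iv_j$ otherwise, i.e.\ $B=\beta vv^T$. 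In either case $\beta=B_{ii}/v_i^2\geq 0$ and $C=A-B=(1-\beta)A$ with $1-\beta=C_{ii}/v_i^2\geq 0$, so $B$ and $C$ lie on the ray $\R_{\geq 0}A$ and $A$ is extreme.

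\medskip
\noindent The only step needing care is this support bookkeeping, together with the elementary combinatorial fact that for $2\le k\le n$ any prescribed pair of indices (of $\{1,\dots,n\}$, or of a fixed $S$ with $|S|\ge k$) lies in some $k$-subset — this is what promotes the local ratios $\beta_K$ to one global constant. (For $k=1$ the cone $(FW^n_1)^*$ is not pointed, so the statement is really about the range $2\le k\le n$; for $k\ge n$ it is the classical description of the extreme rays of $\cS^n_+$.) A slightly more explicit alternative, which also dovetails with Lemma~\ref{exposed}, is to normalize $v$ by a diagonal rescaling and a permutation — both automorphisms of $(FW_k^n)^*$ — to the form $(1,\dots,1,0,\dots,0)$, and then exhibit an exposing matrix $Y\in FW_k^n$: the diagonal matrix with $1$'s in the positions outside $S$, plus a sum of terms $\iota_K(kI_k-J_k)$ over the $k$-subsets $K\subseteq S$ (a single such term if $|S|<k$), each lying in $FW_k^n$ by Proposition~\ref{factor}. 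Checking $\langle Y,X\rangle=0$ for $X\in(FW^n_k)^*$ forces $X\in\R_{\geq 0}vv^T$, exhibiting the ray as exposed, hence extreme.
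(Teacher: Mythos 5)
Your proposal is correct and follows essentially the same route as the paper's own proof: the reverse direction restricts the decomposition $A=B+C$ to $k\times k$ principal blocks, uses extremality of rank-one matrices in $\cS^k_+$ to get local scalars $\beta_K$, handles zero entries of $v$ by showing the corresponding rows of $B$ vanish, and glues the $\beta_K$ into one constant via common $k$-subsets, exactly as in the paper (your explicit case split on $|S|$ versus $k$ is just a slightly more careful version of the same bookkeeping). The forward direction by contraposition using a spectral splitting is likewise the paper's argument, and your closing remark about $k=1$ is a reasonable caveat the paper leaves implicit.
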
		
\begin{proof}
Note that if $A\in \cS_{+}^n$ does not span an extreme ray in $\cS_{+}^n$ then it can be written as a convex combination of elements in $\cS_{+}^n$ that are not multiples of $A$. Since these will still be elements of $(FW_{k}^n)^*$, $A$ will also not span an extreme ray in $(FW_{k}^n)^*$. So any psd matrix that spans an extreme ray of $(FW_{k}^n)^*$ must also span an extreme ray in $\cS_{+}^n$, which is equivalent to say it must have rank one.
	
	Now we prove that if the matrix $A$ has  rank one, then it spans an extreme ray of $(FW_{k}^n)^*$.
	Let $A=xx^T$ and assume now $A=X+Y$ with some $X,Y\in (FW_{k}^n)^*$ and some $x\in\R^n.$ Then for any $k$ element subset  $I\subseteq \{1,2,\ldots,n\}$, $x_Ix_I^T =X_I+Y_I.$
	By the characterization of $(FW_{k}^n)^*,$ $X_I,Y_I$ are {psd}; that is we have found in $\cS_{+}^k$ a representation of a rank $1$ matrix as a sum of two other matrices. Since the null space of a sum of two psd matrices is contained in the nullspace of each, we infer that $X_I$, $Y_I$ are multiples of $x_Ix_I^T$: for some real $\lambda_I$,
	$X_I=\lambda_I x_Ix_I^T,$   $Y_I=(1-\lambda_I) x_Ix_I^T.$
	Now, considering any two $k\times k$ submatrices of $X$ indexed by $I$ and $J$, we have if $i\in I\cap J$, then $x_{ii}=\lambda_I x_i^2=\lambda_J x_i^2$
	so if $x_{ii}\neq 0$ then $\lambda_I=\lambda_J$. Note that if $x_{ii}=0$, the entire $i$-th row and column of $X$ must be zero. For any $I$ and $J$ such that $i\in I$ and $j\in J$ with $x_{ii}\neq 0$ and $x_{jj}\neq 0$, we can pick a $k$-element set  $K$ such that $i,j\in K$ and the above argument gives $\lambda _I=\lambda_J=\lambda_K$. So all are equal to some $\lambda$ and  $X=\lambda x x^T.$
\end{proof}	

We now turn our attention to the particular case of $(FW_{n-1}^n)^*$. To characterize its extreme rays we will need the following auxiliary Lemma.

\begin{lem}\label{rankpsd}
	Assume that $A\in (FW_{n-1}^n)^*$. If for some size $(n-1)$ subset $I$ of $\{1,2,\ldots,n\}$, $\text{rank}(A_I)\leq n-3$ then $A$ is psd.
\end{lem}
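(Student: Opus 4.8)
The goal is to show that if $A \in (FW_{n-1}^n)^*$ and some $(n-1)\times(n-1)$ principal submatrix $A_I$ has rank at most $n-3$, then $A$ is positive semidefinite. The plan is to exploit the fact that membership in $(FW_{n-1}^n)^*$ means \emph{all} size-$(n-1)$ principal submatrices are psd, and combine this with the low-rank hypothesis on one of them. Write $I = \{1,2,\dots,n\} \setminus \{j\}$ for the omitted index $j$; we want to conclude $A \succeq 0$. The key point is that to test $x^T A x \geq 0$ for an arbitrary $x \in \R^n$, we would like to find an $(n-1)$-subset $J$ such that $x$ is supported (effectively) on $J$, or at least such that $x^T A x$ is controlled by a combination of the psd submatrices; this is immediate for vectors with a zero coordinate, so the whole difficulty is handling vectors $x$ with all coordinates nonzero.

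First I would dispose of the case where $A_I$ itself has a zero on its diagonal: then (since $A_I \succeq 0$) the corresponding row and column of $A_I$ vanish, and one can reduce to a smaller situation or argue directly. So assume all diagonal entries of $A_I$ are positive. Next, since $\operatorname{rank}(A_I) \le n-3 < n-1 = |I|$, the submatrix $A_I$ is singular and has a kernel of dimension at least $2$. The plan is to use this $2$-dimensional (or higher) kernel to ``absorb'' the one bad direction coming from index $j$: given any $x \in \R^n$ with $x_j \neq 0$, consider the vector $x_I \in \R^{n-1}$ and, using that $\ker(A_I)$ has dimension $\ge 2$, show that there is a size-$(n-1)$ subset $J$ containing $j$, together with a modification of $x$ supported on $J$ whose $A_J$-quadratic form equals $x^T A x$. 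Concretely: pick a kernel vector of $A_I$ with a nonzero entry in some coordinate $i \in I$; then the $(n-1)$-subset $J = \{1,\dots,n\}\setminus\{i\}$ contains $j$, and one can write $x$ as a sum of a vector supported on $I$ (handled by $A_I \succeq 0$) and adjustments that lie in directions where $A_I$ annihilates — so that the cross terms with the $j$-th row/column get transferred into $A_J$, which is psd by hypothesis. Iterating this bookkeeping, or combining finitely many such psd submatrix inequalities with nonnegative coefficients, should yield $x^T A x \geq 0$.

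The main obstacle, and where the argument needs care, is precisely the combinatorial/linear-algebraic step of showing that the kernel of the single low-rank submatrix $A_I$ is ``large enough and positioned well enough'' to let every full-support vector's quadratic form be rewritten in terms of the other psd submatrices $A_J$. The bound $\operatorname{rank}(A_I) \leq n-3$ (rather than just $\le n-2$) is surely what makes this possible — it guarantees $\dim \ker(A_I) \geq 2$, so that even after we are forced to ``spend'' one kernel direction to handle the coupling with index $j$, there is still room left. I would therefore expect the heart of the proof to be a lemma saying: if $B \succeq 0$ is $(n-1)\times(n-1)$ with $\operatorname{rank} B \leq n-3$, and we adjoin one extra row/column (indexed $j$) subject to the constraint that every $(n-1)$-subset containing $j$ also gives a psd submatrix, then the full matrix is psd. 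Proving that cleanly — likely by choosing a basis adapted to $\ker(A_I)$, zeroing out as much of the $j$-th row as possible by subtracting kernel directions, and then invoking psd-ness of the resulting small submatrices — is the crux. Once that lemma is in place, the statement follows immediately with $B = A_I$.
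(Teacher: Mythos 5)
Your submission is a plan rather than a proof, and the plan leaves its central step unproven: you yourself identify ``the heart of the proof'' as a lemma about adjoining one row/column to a low-rank psd block under the constraint that all $(n-1)\times(n-1)$ principal submatrices containing the new index are psd, and then you do not prove it. The mechanism you sketch does not close on its own. Concretely, write $I=\{1,\dots,n-1\}$, $j=n$, $x=(x_I,x_n)$, and let $v\in\ker(A_I)$ have $v_i\neq 0$. Replacing $x_I$ by $y=x_I-(x_i/v_i)v$ produces a vector $\tilde x$ supported on $J=\{1,\dots,n\}\setminus\{i\}$ with
\[
\tilde x^{T}A\tilde x \;=\; x^{T}Ax \;-\; 2x_n\frac{x_i}{v_i}\,a^{T}v,
\]
where $a$ is the off-diagonal part of the $n$-th column. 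So $A_J\succeq 0$ only gives $x^{T}Ax\geq 2x_n(x_i/v_i)a^{T}v$, which is useless unless you first prove $a\perp\ker(A_I)$; and even granting that, you would still need to control the Schur-type term $a_{nn}-a^{T}A_I^{+}a$. None of this is carried out, so the argument as written does not establish the lemma. (A secondary issue: your opening reduction ``dispose of the case where $A_I$ has a zero diagonal entry'' is also not actually done.)

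The paper's proof is far shorter and avoids all of this. Since every proper principal submatrix of $A$ sits inside some $(n-1)\times(n-1)$ principal submatrix, which is psd by the definition of $(FW_{n-1}^n)^{*}$, all proper principal minors of $A$ are nonnegative; by the standard criterion ($A$ symmetric is psd iff \emph{all} principal minors are nonnegative) it remains to show $\det(A)\geq 0$. Cauchy interlacing between the eigenvalues $\beta_1\leq\cdots\leq\beta_{n-1}$ of $A_I$ and $\gamma_1\leq\cdots\leq\gamma_n$ of $A$ gives $\gamma_2\leq\beta_2$; the hypothesis $\mathrm{rank}(A_I)\leq n-3$ forces $\beta_1=\beta_2=0$, hence $0\le \beta_1\le\gamma_2\leq \beta_2=0$ and $\det(A)=\prod_i\gamma_i=0$. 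This is exactly where the ``$n-3$ rather than $n-2$'' hypothesis enters --- you correctly sensed it guarantees a two-dimensional kernel, but the right way to spend that second zero eigenvalue is through interlacing, not through a combinatorial absorption argument. I recommend you rewrite the proof along these lines.
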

\begin{proof}
	Since $A\in (FW_{n-1}^n)^*$, all its proper principal minors are nonnegative. So $A$ is psd if and only if $\det(A)\geq 0$. But by Cauchy's interlacing theorem, see \cite[p. 185]{Johnson}, if $\beta_1,\ldots, \beta _{n-1}$ are the (nonnegative) eigenvalues of $A_I$ and  $\gamma_1,\ldots, \gamma_n$ are the eigenvalues of $A$, then
	\[\gamma_1\leq \beta_1\leq  \gamma_2\leq \beta_2\leq\ldots\leq \beta_{n-1}\leq \gamma_n.\]
	Now, since $\text{rank}(A_I)\leq n-3$, $\beta_1$ and $\beta_2$ will be zero which leads to $\gamma_2=0$ and so $\det(A)=0$, hence $A$ is psd.	
\end{proof}	

We can now completely characterize the extreme rays of $(FW_{n-1}^n)^*$: they either are psd, a case that we characterized in Lemma \ref{extrmerayrank1}, or they have the following form.

\begin{thm}\label{extrmeraynotpsd}
	If the matrix $A\in (FW_{n-1}^n)^*$ is not psd, the matrix $A$ spans an extreme ray of $(FW_{n-1}^n)^*$ if and only if all of its  $(n-1)\times(n-1)$ principal submatrices have rank $n-2$.
\end{thm}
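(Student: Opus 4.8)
The plan is to prove both directions by relating faces of $(FW_{n-1}^n)^*$ to rank conditions on the principal $(n-1)\times(n-1)$ submatrices, using Lemma \ref{exposed} to pass freely between extreme rays and exposed rays. For the ``only if'' direction, suppose $A$ is not psd and spans an extreme ray, and suppose for contradiction that some principal submatrix $A_I$ has rank at most $n-3$ (note it cannot have rank $n-1$, since then $A_I\succ 0$ and together with the other proper minors being nonnegative this would force — via Cauchy interlacing as in Lemma \ref{rankpsd}, or directly — that $A$ is psd; more carefully, if \emph{all} $A_I$ had full rank $n-1$ one still needs an argument, so the real dichotomy to set up is rank $n-2$ versus rank $\le n-3$). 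If some $A_I$ has rank $\le n-3$, then Lemma \ref{rankpsd} says $A$ is psd, contradicting the hypothesis. Hence every $A_I$ has rank exactly $n-2$ — but wait, this shows the rank is never $\le n-3$; I still must rule out rank $n-1$. Here I would argue: if $A_{I_0}$ has rank $n-1$ for some $I_0$ (so $A_{I_0}\succ 0$), then since all other proper principal minors of $A$ are $\ge 0$ and $A$ is not psd, we have $\det A<0$; but Cauchy interlacing between $A_{I_0}$ (all eigenvalues positive) and $A$ forces $\gamma_1$ to be the only possibly-negative eigenvalue of $A$, and I would use a continuity/deformation argument along the ray or a direct perturbation to exhibit a proper decomposition of $A$, contradicting extremality. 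This last sub-case is the part I expect to need the most care.

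For the ``if'' direction — the heart of the theorem — assume every $(n-1)\times(n-1)$ principal submatrix $A_I$ has rank exactly $n-2$, and assume $A=X+Y$ with $X,Y\in (FW_{n-1}^n)^*$. I want to conclude $X=\lambda A$. The strategy mirrors the rank-one proof of Lemma \ref{extrmerayrank1}: for each size-$(n-1)$ set $I$, the submatrix $A_I=X_I+Y_I$ is a psd matrix of corank $1$ written as a sum of two psd matrices, so both $X_I$ and $Y_I$ have their kernels containing $\ker A_I$, a one-dimensional space spanned by some vector $v_I$. Thus $X_I$ and $Y_I$ are psd matrices of rank $\le n-2$ supported ``orthogonally'' to $v_I$; in the $(n-2)$-dimensional quotient by $v_I$, $A_I$ restricts to a positive definite form, and $X_I$, $Y_I$ restrict to psd forms summing to it. This does \emph{not} immediately force $X_I$ to be a multiple of $A_I$ (that worked for rank one only). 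Instead I would glue the local information across the $n$ choices of $I$: parametrize $X=A-Y$ and show that the constraints ``$\ker A_I\subseteq\ker Y_I$ for all $I$'' together with $Y\in(FW_{n-1}^n)^*$ pin $Y$ down to a one-parameter family. Concretely, write $v_i$ for the kernel vector of $A_{\{1,\dots,n\}\setminus\{i\}}$ (extended by a zero in coordinate $i$); these give $n$ linear conditions $Y v_i$ has zero $j$-th coordinate for $j\ne i$, i.e. $Y v_i = c_i e_i$ for scalars $c_i$. Combined with the psd-ness of each $Y_I$ and a rank count, I expect these to force $Y = \sum c_i (\text{something})$ collapsing to $Y=\mu A$.

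The main obstacle, and where I would spend the most effort, is exactly this gluing step in the ``if'' direction: showing that the pointwise corank-one kernel conditions on all $n$ principal submatrices, rather than just leaving an $(n-2)$-dimensional slack at each one, interact rigidly enough to leave only a one-parameter family. A clean way to organize it: let $v_i$ be the (up to scale) unique kernel vector of the submatrix $A_I$ with $I=\{1,\dots,n\}\setminus\{i\}$, and observe that the genericity encoded by ``rank exactly $n-2$'' means these $v_i$ are in ``general position'' — in particular any $n-1$ of the vectors $\{v_i\}$ (suitably interpreted) together with coordinate hyperplanes span enough of the space. Then the conditions $Y v_i \in \mathbb{R}e_i$ for all $i$, i.e. $e_j^T Y v_i = 0$ whenever $i\ne j$, are $n(n-1)$ linear equations on the $\binom{n+1}{2}$ entries of $Y$; a dimension count plus the general-position hypothesis should show the solution space is exactly $2$-dimensional, spanned by $A$ and one psd matrix, and then the psd constraint on the $Y_I$ eliminates the spurious direction. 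I would also double-check the base and degenerate cases (e.g. $n=3$, where $(FW_2^3)^*$ is well understood and the statement can be verified by hand) to make sure the general-position argument is not vacuously mis-stated, and to calibrate the exact rank bookkeeping before writing the general proof.
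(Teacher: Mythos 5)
Your proposal is a plan rather than a proof: in both directions the step you yourself flag as ``the part needing the most care'' is precisely the step that carries the whole theorem, and you do not supply it. In the ``only if'' direction, ruling out a principal submatrix of full rank $n-1$ is not done by ``a continuity/deformation argument or a direct perturbation'' left unspecified. The paper's argument is concrete: by Lemma~\ref{exposed} the ray is exposed, so there is $B\in FW_{n-1}^n$ with $\langle B,A\rangle=0$ and $\langle B,X\rangle>0$ off the ray; writing $B=\sum_I\iota_I(B(I))$ with each $B(I)\succeq 0$ forces $\langle B(I),A_I\rangle=0$ for every $I$, hence $B(\{1,\dots,n-1\})=0$ if $A_{\{1,\dots,n-1\}}\succ 0$; then taking $a$ to be the $n$-th column of $A$ and $\tilde A=aa^T$, one checks $\langle B,\tilde A\rangle=0$ because for every other $I$ the row space of $a_Ia_I^T$ sits inside that of $A_I$ --- contradicting strict positivity of $B$ off the ray, since $\tilde A$ is not a multiple of $A$. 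Nothing in your sketch produces a second zero of the exposing functional or a proper decomposition of $A$.

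In the ``if'' direction your gluing strategy (the linear system $e_j^TYv_i=0$, a dimension count, and ``general position'') is not carried out, and I doubt it closes as stated: you would still have to show the solution space is exactly two--dimensional and then that psd-ness of all $Y_I$ kills the spurious direction, neither of which is argued. The paper avoids this entirely by arguing the contrapositive: if $A=\gamma X+(1-\gamma)Y$ nontrivially, extend the segment $X_\lambda=\lambda X+(1-\lambda)Y$ to the endpoints $\lambda_{\min},\lambda_{\max}$ of the compact trace-normalized section. Your correct observation that $\ker A_I\subseteq\ker(X_\lambda)_I$ pins one eigenvalue of $(X_\lambda)_I$ at zero with a fixed eigenvector, so the only way $X_\lambda$ can reach the boundary of $(FW_{n-1}^n)^*$ is for a second eigenvalue of some $(X_\lambda)_I$ to hit zero, dropping its rank to $n-3$; Lemma~\ref{rankpsd} then makes both endpoints psd, hence $A$ psd, contradicting the hypothesis. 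I recommend you adopt this boundary-and-continuity argument: it uses exactly the kernel-containment fact you already have, but converts it into a finished proof.
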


\begin{proof}
Suppose $A$ spans an extreme ray of $(FW_{n-1}^n)^*$. By Lemma \ref{rankpsd}, if any principal submatrix $A_I$ has rank smaller than $n-2$ then $A$ is psd, so it is enough to show that it must have rank at most $n-2$.
Suppose by contradiction that $A_{\{1,2,\ldots,n-1\}}$ is a principal submatrix of full rank. By Lemma~\ref{exposed}, the ray spanned by $A$ is an exposed extreme ray of $(FW_{n-1}^n)^*$, so there exists a $B\in (FW_{n-1}^n)^{**}=FW_{n-1}^n$ such that $\langle B,A\rangle=0$ and $\langle B,X\rangle>0$ for all $X\in (FW_{n-1}^n)^*\setminus\{\lambda A\;|\; \lambda \geq 0\}$.
	
	This $B\in FW_{n-1}^n$, and  so it can be written as
	\[B=\sum_{I\subseteq \{1,2,\ldots,n\}, |I|=n-1 }\iota_{I}(B(I)),\quad \text{for}\; B(I)\in \cS_+^{n-1}.\]
	
	We thus get
	\[0=\langle B,A\rangle=\sum_{I\subseteq \{1,2,\ldots,n\}, |I|=n-1 }\langle\iota_{I}(B(I)),A\rangle=\sum_{I\subseteq \{1,2,\ldots,n\}, |I|=n-1 }\langle B(I),A_I\rangle.\]
	Since the  $(n-1)\times (n-1)$ principal submatrices of $A$ are all {psd}, we get that all the inner products are nonnegative and hence must be $0$, which means $\langle B(I),A_I\rangle=0$ for all $I.$ 	
	Under the current supposition that $A_{\{1,2,\ldots,n-1\}}$ is not singular, we conclude that $B({\{1,2,\ldots,n-1\}})=0$.
	
	Let now $a$ be the $n$-th column of $A$ and let  $ \tilde A= aa^T.$ Of course $\tilde A\in \cS_+^n$ and so $\tilde A\in (FW_{n-1}^n)^*.$
	We have
	\[\langle \iota_{I}(B(I)), \tilde A\rangle = \langle \iota_{I}(B(I)),aa^T \rangle = \langle B(I), a_I a_I^T\rangle.\]
	But note that $a_I$ is a column of $A_I$ for $I\neq\{1,2,\ldots,n-1\}$, so the row space of $\tilde A_I=a_I a_I^T$ is contained in that of $A_I$ hence there exists some $\lambda > 0$ such that
$A_I=\lambda a_Ia_I^{T}+A_I^\prime$ for some $A_I^\prime\succeq 0$ (see Theorem 8.6.2 in \cite{bernstein2009matrix}) and $\langle B(I), A_I\rangle=0$ implies $\langle B(I), a_I a_I^T\rangle=0$.
	Since we know already $B({\{1,2,\ldots,n-1\}})=0$ we get $\langle B,\tilde A \rangle=0$ . Now evidently $\tilde A$ is not a multiple of $A$ so it does not span the same  ray and we have a contradiction to our assumption that $A_{\{1,2,...,n-1\} }$ has full rank. Therefore  $A_{\{1,2,...,n-1\} }$ has rank at most $n-2$ and
	similarly any other principal $(n-1)\times (n-1)$ submatrix has rank exactly $n-2.$
	
	For the reverse direction, assume that $A$ does not span an extreme ray of 	$(FW_{n-1}^n)^*.$ This means that we can write it as
	\[A=\gamma X+(1-\gamma)Y\ \text{for some distinct }\ X,Y \in (\widetilde{FW}_{n-1}^n)^*\ \text{and}\; \gamma\in ]0,1[,\]  where $(\widetilde{FW}_{n-1}^n)^*$ is the compact section of the cone $(FW_{n-1}^n)^*$ consisting of the matrices that  have the same trace as matrix $A$.
	
	Let $X_{\lambda}=\lambda X+(1-\lambda)Y,\; \lambda\in \R$. Given some $I$, we know that $(X_\lambda)_I$ has rank at most $n-2$: in fact, there is a $1$ dimensional space, $\ker(A_I)$, which is always contained in $\ker(X_\lambda)_I$. Then the set $L=\{\lambda|X_{\lambda}\in (\widetilde{FW}_{n-1}^n)^*\}=[\lambda_{min},\lambda_{max}]$ since $(\widetilde{FW}_{n-1}^n)^*$ is compact. The eigenvalues and eigenvectors of $(X_\lambda)_I$ change continuously with $\lambda$ and since one eigenvalue corresponds to a fixed eigenvector, the only way for $(X_\lambda)_I$ to stop being psd is if a second eigenvalue switches from positive to negative, so the rank of $(X_\lambda)_I$  is at most $n-3$. Since $X_{\lambda_{max}}$ is in the boundary, there must be some $I$ for which this happens, hence by  Lemma~\ref{rankpsd} is psd, and similarly  for $X_{\lambda_{min}}$ with a possibly different $I$. Hence $A$ is psd since it is a convex combination of  $X_{\lambda_{max}}$ and  $X_{\lambda_{min}}$.	This is a contradiction to the hypothesis.
\end{proof}

Recall that the operation $X  \mapsto Q^T X Q$ is an automorphism of $\cS^n$ if $Q$ is invertible.
It is also clear that it restricts to an automorphism of $\cS_+^n$, but not necessarily to one of $(FW_k^n)^*$. In the next observation we give some simple sufficient conditions for that to be true. This will allow us to write the extreme rays in some canonical form.

\vs\noindent
{\bf Observation.}  
The operation $X  \mapsto Q^T X Q$ restricts to an automorphism of $(FW_k^n)^*$ if $Q$ is a positive definite diagonal matrix or a permutation matrix. In particular, it preserves extreme rays.
\begin{proof}
Since the operation is invertible, we just need to show it preserves $(FW_k^n)^*$.
$A\in (FW_k^n)^*,$ if it can be written as $VV^T$ with at most $k$-nonzero entries in each column of $V$. But the image of $A$ is then $Q^TV (Q^TV)^T$ and the columns of $Q^TV$ are just $Q^T$ times the columns of $V$. Since multiplying a vector by a permutation or a positive definite diagonal matrix preserves the number of nonzero entries, we have that the image of $A$ is still in $(FW_k^n)^*$.
\end{proof}	

Based on the results that we have proven so far, we can give in explicit form  the complete list of extreme rays of $(FW_3^4)^*$ .
\begin{prop}\label{matrixform}
	Let $B$ be a symmetric $4\times 4$ non {psd}  matrix which spans an extreme ray of $(FW_3^4)^*$, then for some $a,c \in ]-\pi,\pi[\setminus\{0\}$ some permutation $P$ and some nonsingular diagonal matrix $D$,
	\[DPBP^{T}D= \begin{bmatrix}
	1   &\cos(a) &\cos(a-c)   &\cos(c)\\
	\cos(a)& 1  &\cos(c) & \cos(a- c)\\
	\cos(a-c)& \cos(c) &1        &\cos(a) \\
	\cos(c)& \cos(a- c) &\cos(a) &1
	\end{bmatrix}.\]
\end{prop}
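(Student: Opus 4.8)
The plan is to use Theorem~\ref{extrmeraynotpsd}, which applies here with $n=4$ since $k=3=n-1$: if $B$ is a non-psd extreme ray of $(FW_3^4)^*$, then every $3\times 3$ principal submatrix of $B$ has rank exactly $2$. First I would normalize: since $B$ spans an extreme ray of $(FW_3^4)^*$, all its $3\times 3$ principal minors are nonnegative, hence all its $2\times 2$ principal minors are nonnegative (they arise as principal submatrices of psd $3\times 3$ blocks). I would argue no diagonal entry can be zero: if $B_{ii}=0$, nonnegativity of $2\times 2$ minors forces the whole $i$-th row and column to vanish, making some $3\times 3$ submatrix have rank $\le 1$, contradicting rank $2$. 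So all diagonal entries are positive, and by the Observation I may conjugate by a positive definite diagonal matrix to assume $B_{ii}=1$ for all $i$. Then each $2\times 2$ minor $1-B_{ij}^2\ge 0$ forces $B_{ij}=\cos(\theta_{ij})$ for some $\theta_{ij}\in]-\pi,\pi]$, and the diagonal-conjugation/permutation freedom is now reduced to conjugating by $\pm1$ diagonal matrices and permutation matrices.

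The core of the argument is the rank-$2$ condition on the four $3\times 3$ principal submatrices. For a $3\times3$ symmetric matrix with unit diagonal and off-diagonal cosines $\cos\alpha,\cos\beta,\cos\gamma$, the determinant is $1-\cos^2\alpha-\cos^2\beta-\cos^2\gamma+2\cos\alpha\cos\beta\cos\gamma$, and this vanishes exactly when $\gamma=\pm(\alpha\pm\beta)$ (mod $2\pi$) — equivalently when the "angles" are consistent with three unit vectors in a plane. So I would introduce an angle for each edge of $K_4$ and encode each of the four triangle-determinant-zero conditions as an additive relation among the three edge-angles of that triangle. Solving this little linear system over angles: fix the triangle on $\{1,2,3\}$ to get, say, $\theta_{13}=\theta_{12}+\theta_{23}$; renaming $a=\theta_{12}$ and then propagating through the triangles $\{1,2,4\}$, $\{1,3,4\}$, $\{2,3,4\}$ forces the remaining entries into the claimed pattern. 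After a relabeling of the variables (absorbing signs via a diagonal $\pm1$ conjugation, which only changes $\theta_{ij}$ to $\pm\theta_{ij}$ and lets us clear sign ambiguities) and a permutation, one arrives at the displayed matrix with the two free parameters $a$ and $c$; the exclusion $a,c\ne 0$ comes from the fact that $a=0$ or $c=0$ would make the matrix psd (it becomes a Gram matrix of vectors spanning a $\le 2$-dimensional space, in fact a rank-deficient psd matrix), contradicting the hypothesis that $B$ is not psd.

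I would also check that rank exactly $2$ (not $\le 1$) is automatically satisfied by the displayed form for $a,c\notin\{0,\pm\pi\}$ — otherwise two of the unit-diameter "directions" coincide and again $B$ would be psd — so that the case analysis is consistent, and note that the matrix is indeed a genuine member of $(FW_3^4)^*\setminus\cS_+^4$ (its $4\times4$ determinant is negative for generic $a,c$), which shows the characterization is not vacuous, though for the proposition as stated only the "only if" direction is required.

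The step I expect to be the main obstacle is the angle bookkeeping: the determinant-zero condition $\cos\gamma=\cos(\alpha\pm\beta)$ carries a sign ambiguity for each of the four triangles, and I must show that, after using up the residual $\pm1$-diagonal and permutation symmetry, all the resulting sign branches either collapse to the one displayed pattern or are ruled out by forcing $B$ to be psd. Keeping careful track of which conjugations remain available after each normalization step — and verifying that the four triangle relations are not over-determined in a way that would kill the two-parameter family — is the delicate part; the matrix computations themselves (the $3\times3$ determinant identity and its factorization into $\sin\frac{\alpha+\beta+\gamma}{2}\cdot(\ldots)$-type factors) are routine.
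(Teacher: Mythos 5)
Your proposal follows essentially the same route as the paper's own proof: normalize the diagonal to $1$ by a diagonal conjugation, parametrize the off-diagonal entries by cosines, impose the vanishing of all four $3\times 3$ principal minors via Theorem~\ref{extrmeraynotpsd} (so each triangle gives $\cos\gamma=\cos(\alpha\pm\beta)$), and then resolve the sign ambiguities using the residual permutation and $\pm1$-diagonal symmetry while discarding the branches that force $B$ to be psd. The ``delicate'' sign bookkeeping you defer is exactly what the paper carries out explicitly, via a table of the eight sign choices together with the determinants of $B_{234}$ and of $B$ (four branches give $\det(B)=0$ and are psd, the other four coincide up to relabeling $c\mapsto -c$ and a permutation, after which $\det(B_{234})=0$ pins down $b=a-c+k\pi$); your plan is correct and matches the paper's, but would need that explicit case verification to be complete.
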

\begin{proof}
	First note that by the above observation, there is a scaling that takes all diagonal entries of $B$ to $1$. Furthermore, by assumption, $B\in (FW_3^4)^*$ which means all of its $3\times 3$ and accordingly its $2\times 2$ principal submatrices are psd, hence for all $i,j \in \{1,2,3,4\}$, $0\leq b_{ii}b_{jj}-b_{ij}^2 =1 -b_{ij}^2$ and hence $b_{ij}^2\leq 1$ for all pairs $(i,j)$. Therefore, using that the image of the cosine function is $[-1,1],$ we can write
	\[DBD=\begin{bmatrix}
	1     & \cos(a)& \cos(b) & \cos(c) \\
	\cos(a)& 1     & b_{23}& b_{24}\\
	\cos(b)& b_{23}& 1     & b_{34} \\
	\cos(c)& b_{24}& b_{34}& 1
	\end{bmatrix},\]
	for some $a,b,c \in [-\pi,\pi]$ and some diagonal matrix $D$. From now on we assume that $B$ has this form. The possibilities, $a,b,c\in \{-\pi,0,\pi\}$ will be excluded below. Now since $B$ spans an extreme ray of $(FW_3^4)^*$, by Theorem~\ref{extrmeraynotpsd} all of its $3\times 3$ principal submatrices have rank $2$ and hence have zero determinant. Hence by starting with principal submatrix $B_{123}$, we have 		
	\[0=\text{det}\left(\begin{bmatrix}
	1&\cos(a) &\cos(b)\\
	\cos(a)& 1& b_{23} \\
	\cos(b)&b_{23}&1
	\end{bmatrix}\right)= 1 - b_{23}^2 - \cos(a)^2 + 2b_{23}\cos(a)\cos(b) - \cos(b)^2.\]
	By solving this quadratic equation for $b_{23}$ one finds
	
	$$\begin{array}{rcl}	
	b_{23}&\in &\{\cos(a)\cos(b) \pm \sqrt{1-\cos(a)^2-\cos(b)^2+\cos(a)^2\cos(b)^2} \}\\
	&=& \{\cos(a)\cos(b) \pm \sqrt{(1-\cos(a)^2) (1-\cos(b)^2)} \}\\
	&=& \{\cos(a)\cos(b) \pm \sin(a)\sin(b) \}\\
	&=&\{\cos(a\mp b)\}. 	
	\end{array}$$
	
	We do completely analogous calculations for principal submatrices  $B_{134}$  and  $B_{124}$ and obtain
	$b_{34}\in \{\cos(b\pm c)\}$ and
	$b_{24}\in \{\cos(a\pm c)\}$,  respectively.
	Now we have eight matrices that emerge from choosing one of the  symbols $+$ or $-$  in each of the patterns  $a\pm b, a\pm c, b\pm c$ existent in the matrix below by taking care that the symmetry of the matrix is preserved.
	\[\begin{bmatrix}
	1   &\cos(a) &\cos(b)   &\cos(c)\\
	\cos(a)& 1  &\cos(a\pm b) & \cos(a\pm c)\\
	\cos(b)& \cos(a\pm b) &1        &\cos(b\pm c) \\
	\cos(c)& \cos(a\pm c) &\cos(b\pm c) &1
	\end{bmatrix}.\]
	The following table indicates in the first column the possible selections of signs in $ a\pm b, a\pm c, b \pm c,$ respectively;
	and  in the second column and the third column the determinants of the respective matrices  $ B_{234}$  and $ B.$
	\[
	\begin{array}{ccc}
	x\pm y&  \det( B_{234})                                 &     \det(B) \\
	+, +, +& 4\sin(a) \sin(b) \sin(c) \sin(a + b + c)&   -4 \sin(a)^2 \sin(b)^2 \sin(c)^2\\
	+, +, -& 0 & 0\\
	+, -, +& 0 & 0\\
	+, -, -& -4\sin(a) \sin(b) \sin(a + b - c) \sin(c)&  -4\sin(a)^2\sin(b)^2\sin(c)^2\\
	-, +, +& 0& 0\\
	-, +, -& -4\sin(a) \sin(b) \sin(c) \sin(a - b + c)& -4\sin(a)^2 \sin(b)^2 \sin(c)^2\\
	-, -, +& 4 \sin(a) \sin(b) \sin(a - b - c) \sin(c)  &   -4\sin(a)^2 \sin(b)^2 \sin(c)^2\\
	-, -, -& 0&  0
	\end{array}
	\]
	Now assume one of the reals $a,b,c$ is $0$ or $\pi.$
	Then the table shows that all entries in columns two and three vanish. Hence the matrix $B$ in this case is {psd}. Thus in order that $B$, as required, is not {psd} it is necessary that $a,b,c\not \in\{-\pi,0,\pi\}$.  In this case column 3 guarantees we do not get a  {psd} matrix $B$ in exactly the cases of the sign choices  $+++, +--,-+-, --+$  for $ a\pm b, a\pm c, b \pm c,$ respectively. The matrices corresponding to rows, 2,3,5,8 of the table are {psd} independent of	choices $a,b,c$.
	Explicitly this means that $B$ must be one of the following four matrices

	\[\hspace*{-1cm}\begin{bmatrix}
	1   &\cos(a) &\cos(b)   &\cos(c)\\
	\cos(a)& 1  &\cos(a+ b) & \cos(a+ c)\\
	\cos(b)& \cos(a+ b) &1        &\cos(b+ c) \\
	\cos(c)& \cos(a+ c) &\cos(b + c) &1
	\end{bmatrix},
	\begin{bmatrix}
	1   &\cos(a) &\cos(b)   &\cos(c)\\
	\cos(a)& 1  &\cos(a+ b) & \cos(a- c)\\
	\cos(b)& \cos(a+ b) &1        &\cos(b- c) \\
	\cos(c)& \cos(a- c) &\cos(b- c) &1
	\end{bmatrix}, \]
	\[\hspace*{-1cm} \begin{bmatrix}
	1   &\cos(a) &\cos(b)   &\cos(c)\\
	\cos(a)& 1  &\cos(a- b) & \cos(a+ c)\\
	\cos(b)& \cos(a- b) &1        &\cos(b- c) \\
	\cos(c)& \cos(a+ c) &\cos(b- c) &1
	\end{bmatrix}, \begin{bmatrix}
	1   &\cos(a) &\cos(b)   &\cos(c)\\
	\cos(a)& 1  &\cos(a- b) & \cos(a- c)\\
	\cos(b)& \cos(a- b) &1        &\cos(b+ c) \\
	\cos(c)& \cos(a- c) &\cos(b+ c) &1
	\end{bmatrix}.\]
	
	Note by substituting the letter $c$ by $-c$ in the left upper matrix we get the right upper matrix because $\cos(-c)=\cos( c).$ Exactly the same remark leads from the left lower matrix to the right lower matrix. Finally note that after doing the transpositions of rows and columns $3,4$, the upper left matrix shown takes the form
	\[\begin{bmatrix}
	1   &\cos(a) &\cos(c)   &\cos(b)\\
	\cos(a)& 1  &\cos(a+ c) & \cos(a+ b)\\
	\cos(c)& \cos(a+ c) &1        &\cos(b+ c) \\
	\cos(b)& \cos(a+ b) &\cos(b + c) &1
	\end{bmatrix}\]
	and after changing the name of variable $c$ to $-b$ and of variable $b$ to $c$   and  noting  that $\cos(b-c)=\cos(c-b)$ we see we have obtained the left lower matrix. Hence we have one form and its possible permutations. We focus at the right lower matrix as the standard. Thus we have showed that after applying a suitable permutation $P$ we have
	\[P^TBP=	\begin{bmatrix}
1   &\cos(a) &\cos(b)   &\cos(c)\\
	\cos(a)& 1  &\cos(a- b) & \cos(a- c)\\
	\cos(b)& \cos(a- b) &1        &\cos(b+ c) \\
	\cos(c)& \cos(a- c) &\cos(b+ c) &1
	\end{bmatrix}.\]			
	Once again we will now assume that $B$ has this precise form.
		 Now we know that the determinant of the submatrix  $B_{234}$ is $ 4\sin(a)\sin(b)\sin(a - b - c)\sin(c). $ We know by Theorem~\ref{extrmeraynotpsd} that all $3\times 3$ principal minors must vanish, so $\det(B_{234})=0$ which happens if and only if  $b=a-c+k\pi$.  Substituting this in the start matrix $B$ we get the following two forms
	\[ \begin{bmatrix}
	1   &\cos(a) &\delta\cos(a-c)   &\cos(c)\\
	\cos(a)& 1  &\delta\cos(c) & \cos(a- c)\\
	\delta\cos(a-c)& \delta\cos(c) &1        &\delta\cos(a) \\
	\cos(c)& \cos(a- c) &\delta\cos(a) &1
	\end{bmatrix},\]
	with $\delta =\pm 1$. But note that these are the same up to scaling by the diagonal matrix $\Diag(1,1,-1,1).$ So we may assume $\delta=1$, finishing the proof.
\end{proof}	

\section{Symmetric quadratics and sums of $k$-nomial squares}\label{5}

Recall that writing a polynomial as a sum of squares of $k$-nomials provides a certificate for non-negativity. Since it is a weakening of the full strength sum of squares, it is clear that it is only a sufficient condition. In the case of sums of squares, however,  Reznick \cite{Reznick1995} showed that any positive definite form in $n$ variables has a sum of squares certificate if multiplied by a sufficiently high power of $G_n=(x_1^2+\cdots+x_n^2)$. This shows that for positive polynomials we can always get sums of squares certificates of nonnegativity, provided we are willing to bump up the degree. This motivates the analogous construction for so$k$s: we say
that a polynomial $p$ in $n$ variables is $r$-so$k$s if $G_n^rp$ is a so$k$s. This can in fact help, for instance one can show that the famous Motzkin polynomial is $2$-so$2$s, but it turns out that it does not capture all positive polynomials.

Ahmadi and Majumdar in~\cite{Ahm14} considered the symmetric quadratic forms
\[p_n^a=(\sum_{i=1}^n x_{i})^2+(a-1)\sum_{i=1}^n x_{i}^2\]
when $n=3$ and proved that if $a<2$ then no nonnegative integer $r$ can be chosen so that
$(x_1^2+x_2^2+x_3^2)^r p_3^a$   is a sum of squares of binomials, although {$p_n^a$} is clearly nonnegative for $a\geq 1$.
In this section, we extend their negative result along the same lines, to include all symmetric quadratic forms and all $k$'s. We will show that $p_n^a$ is $r$-so$k$s for some $r$ if and only if it is so$k$s.

In order to do so, we start by a lemma, that will be used throughout our results, that relates the coefficients of a quadratic form $q$ to those of $G_n^rq$.

\begin{lem}\label{coefficients}
	Consider a quadratic form $q(x)=x^{T}Qx$  and a polynomial $p$  related to $q$ by
	$p=(\sum_{i=1}^n (\lambda_i x_i)^2)^r\,q.$   Then
	every monomial of $p$ has at most two odd degree variables and we have $p_{(i,j)}=2 (\sum_{i=1}^n \lambda_i^2)^r q_{ij}$ and
	$p_0=(\sum_{i=1}^n \lambda_i^2)^r \tr(Q)$ where $p_{(i,j)}$ is the sum of coefficients  of the monomials in which $x_{i}$ and $x_{j}$	have odd degree, $p_0$ is the sum of coefficients of monomials in which every variable has even degree and $q_{ij}$ is the entry $(i,j)$ of $Q$.
\end{lem}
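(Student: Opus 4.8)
The plan is to expand the multiplier by the multinomial theorem and keep track of parities of exponents. Write $S := \sum_{i=1}^n(\lambda_i x_i)^2 = \sum_{i=1}^n \lambda_i^2 x_i^2$, so that $S^r = \sum_{|\alpha|=r}\binom{r}{\alpha}\big(\prod_i \lambda_i^{2\alpha_i}\big)\prod_i x_i^{2\alpha_i}$; the key structural observation is that every monomial occurring in $S^r$ has \emph{even} degree in every variable. On the other side, $q = \sum_k q_{kk}x_k^2 + 2\sum_{k<l}q_{kl}x_kx_l$. Multiplying an all-even monomial of $S^r$ by $x_k^2$ again yields an all-even monomial, while multiplying it by $x_kx_l$ with $k\neq l$ yields a monomial whose only odd-degree variables are $x_k$ and $x_l$. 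Since $p=S^rq$ is a sum of products of this shape, every monomial of $p$ has at most two odd-degree variables; this is the first assertion.

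Next I would split $p$ according to the set of its odd-degree variables. By the observation above, the all-even part of $p$ is exactly $S^r\cdot\sum_k q_{kk}x_k^2$ (an off-diagonal term $x_kx_l$ of $q$ can never produce an all-even monomial), and for each pair $i\neq j$ the sum of the monomials of $p$ in which precisely $x_i$ and $x_j$ have odd degree is exactly $2q_{ij}\,x_ix_j\cdot S^r$ (only the $x_ix_j$-term of $q$ can contribute such a parity pattern, again because $S^r$ is all-even). Here I would note that the monomials appearing in $2q_{ij}x_ix_jS^r$ are pairwise distinct as $\alpha$ ranges over $|\alpha|=r$, so the phrase ``sum of coefficients'' is unambiguous, and similarly for the all-even part.

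Finally I would invoke the elementary fact that the sum of the coefficients of a polynomial $f$ equals $f(1,\dots,1)$. Evaluating $S$ at $x=(1,\dots,1)$ gives $\sum_i\lambda_i^2$, hence $S^r(1,\dots,1)=(\sum_i\lambda_i^2)^r$. Therefore $p_{(i,j)} = \big(2q_{ij}x_ix_jS^r\big)\big|_{x=(1,\dots,1)} = 2(\sum_i\lambda_i^2)^r q_{ij}$, and $p_0 = \big(S^r\sum_k q_{kk}x_k^2\big)\big|_{x=(1,\dots,1)} = (\sum_i\lambda_i^2)^r\,\tr(Q)$, as claimed.

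There is no serious obstacle; the lemma is a bookkeeping statement. The only point requiring a little care is the clean separation of the monomials of $p$ into the all-even class and the classes indexed by pairs $\{i,j\}$, which rests entirely on the fact that $S^r$ contains only monomials of even degree in each variable, so that the parity pattern of a monomial of $p$ is dictated by which monomial of $q$ produced it. Once that is established, the two coefficient-sum formulas fall out of the substitution $x_i\mapsto 1$.
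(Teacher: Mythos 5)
Your proof is correct and follows essentially the same route as the paper: expand $(\sum_i\lambda_i^2x_i^2)^r$ by the multinomial theorem, observe it is all-even so the parity pattern of each monomial of $p$ is inherited from the term of $q$ that produced it, and then sum coefficients over each parity class. The only cosmetic difference is that you evaluate at $(1,\dots,1)$ to total the coefficients, whereas the paper re-invokes the multinomial theorem to get $(\sum_i\lambda_i^2)^r$; these are the same computation.
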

\begin{proof}
	The quadratic form is 	
	\[ q(x)=\sum_{1\leq i,j\leq n}x_{i}q_{ij}x_{j}=\sum_{i=1}^{n}q_{ii}x_{i}^{2}+\sum_{1\leq i<j\leq n} 2q_{ij}x_{i}x_{j},\]
	while by the  multinomial theorem  we have
	\[( (\lambda_1 x_{1})^2+\cdots + (\lambda_n x_{n})^2)^r=
	\sum_{i_{1}+\cdots+i_{n}=r}  \binom{r}{i_{1},\ldots,i_{n}}
	(\lambda_1 x_{1})^{2i_{1}}(\lambda_2 x_{2})^{2i_{2}}\ldots (\lambda_n x_{n})^{2i_{n}}.\]
	Thus, {by looking to the $\lambda$s after multiplication}, from the definition of $p,$ we get
	\begin{align*}
	p=&\sum_{(i,\underline{i})\in J_{1}} q_{ii}
	\binom{r}{\underline{i}}  \lambda_{1}^{2i_{1}}\cdots  \lambda_{n}^{2i_{n}} \cdot
	x_{1}^{2i_{1}}\cdots x_{i}^{2i_{i}+2}\cdots x_{n}^{2i_{n}}\\
	&+\sum_{((i,j),\underline{i})\in J_{2}} 2q_{ij}  \binom{r}{\underline{i}}
	\lambda_{1}^{2i_{1}}\cdots \lambda_{i}^{2i_{n}}\cdot
	x_{1}^{2i_{1}}\ldots x_{i}^{2i_{i}+1}\ldots x_{j}^{2i_{j}+1}\ldots x_{n}^{2i_{n}},
	\end{align*}
	where $\underline{i}=(i_{1},\ldots,i_{n})$ and, with $|\underline{i}|=i_{1}+\cdots+i_{n}$,
	\begin{align*}
	J_{1}&=\{(i,\underline{i}): i\in \{1,\ldots,n\},\underline{i}\in \mathbb{Z}_{\geq 0}^n,|\underline{i}|=r\},\\
	J_{2}&=\{((i,j),\underline{i}): 1\leq i<j\leq n,\underline{i}\in \mathbb{Z}_{\geq 0}^n,|\underline{i}|=r\}.
	\end{align*}
	From the above equation for $p$ , we recognize that
	\[ p_{(i,j)}=2q_{ij}\sum_{i_{1}+\cdots+i_{n}=r} \binom{r}{i_{1},\ldots,i_{n}}
	\lambda_{1}^{2i_{1}}\cdots \lambda_{i}^{2i_{n}}
	=2 q_{ij} (\lambda_1^2+\cdots +\lambda_n^2)^r ,\]
	again by  the multinomial theorem; and  similarly we have
	\[\ny\ny\ny p_0=
	\sum_{i=1}^{n}q_{ii}\sum_{i_{1}+\cdots+i_{n}=r}  \binom{r}{i_{1},\ldots,i_{n}}
	\lambda_{i}^{2i_{1}} \cdots \lambda_{n}^{2i_{n}}=(\lambda_1^2+\cdots+\lambda_n^2 )^r \tr (Q).\]
\end{proof}

We are interested in characterizing when the quadratic forms $p_n^a$ are sums of $k$-nomial squares. Note that $p_n^a$, can be written as $a \sum_{i=1}^n x_i^2 +2\sum_{i<j}x_ix_j$, so $p_n^a= z(x)^T  Q z(x)$ where $Q$ is the $n \times n$ matrix with $a$'s in the diagonal and $1$'s in the off-diagonal entries. So $Q$ can be seen as a diagonal matrix perturbed by a rank one matrix, and in those cases we can easily compute the determinant (see for instance Muir's treatise~\cite[p 59]{muir1933treatise}).
\begin{lem}\label{determinant}
Consider the $m \times m$ matrix that is $b$ in the diagonal and $c$ in the off-diagonal entries.
Then its determinant is $(b-c+cm)(b-c)^{m-1}$.
\end{lem}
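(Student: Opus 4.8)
The plan is to recognize the matrix in the statement --- call it $M$ --- as a rank-one perturbation of a multiple of the identity. Writing $I_m$ for the identity and $J_m$ for the all-ones $m\times m$ matrix, we have $M = (b-c)I_m + cJ_m$, and since $J_m = \mathbf{1}\mathbf{1}^T$ with $\mathbf{1} = (1,\dots,1)^T \in \R^m$, this is a rank-one update. The shortest route from here is spectral: $J_m$ has eigenvalue $m$ on the span of $\mathbf{1}$ and eigenvalue $0$ on the $(m-1)$-dimensional orthogonal complement $\mathbf{1}^{\perp}$, so $M$ has eigenvalue $(b-c)+cm$ with multiplicity $1$ and eigenvalue $b-c$ with multiplicity $m-1$. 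Taking the product of the eigenvalues gives $\det M = (b-c+cm)(b-c)^{m-1}$, as claimed.

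If one prefers to avoid eigenvalues, the same conclusion follows by elementary operations: subtract the first row of $M$ from each subsequent row, which zeroes every off-diagonal entry lying outside the first row and column; then add columns $2,\dots,m$ to the first column, producing an upper-triangular matrix with diagonal $(b+(m-1)c,\,b-c,\,\dots,\,b-c)$, whose determinant is immediately $(b+(m-1)c)(b-c)^{m-1}$. A third option is the matrix determinant lemma, which gives $\det\bigl((b-c)I_m + c\mathbf{1}\mathbf{1}^T\bigr) = (b-c)^m\bigl(1 + \tfrac{c}{\,b-c\,}\,\mathbf{1}^T\mathbf{1}\bigr) = (b-c)^{m-1}(b-c+cm)$ for $b\neq c$, extended to all $b,c$ by observing that both sides are polynomials; this is essentially the computation referenced in Muir~\cite[p.~59]{muir1933treatise}.

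There is no genuine obstacle here. The only point needing a moment's thought is the degenerate case $b=c$, in which $M=cJ_m$ has rank at most one and the formula must (and does) collapse to $0$ for $m\geq 2$; both the spectral argument and the polynomial-identity remark handle this automatically, so no case analysis is required. I would write up the spectral version, as it is the cleanest and makes the structure of the two eigenvalues transparent, which is exactly the form in which the lemma will be applied to $Q$ (with $b=a$, $c=1$, $m=n$).
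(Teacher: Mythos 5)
Your proposal is correct --- in fact it contains three independent, complete proofs (spectral, row/column reduction, matrix determinant lemma), any one of which suffices, and your handling of the degenerate case $b=c$ is sound. For comparison: the paper offers no proof of this lemma at all; it simply observes that $Q$ is a rank-one perturbation of a multiple of the identity and cites Muir's treatise for the determinant formula. So there is nothing in the paper to diverge from; your spectral write-up (eigenvalue $b-c+cm$ on the span of the all-ones vector, eigenvalue $b-c$ on its orthogonal complement) is a clean self-contained substitute for that citation, and, as you note, it also makes transparent why the principal submatrices of $Q$ appearing later are psd exactly when both eigenvalue expressions are nonnegative.
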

Since all the principal submatrices of $Q$ have this precise form, we can use this result to characterize when $p_n^a$ is so$k$s.

\begin{prop}
	If   $a\geq \frac{n-1}{k-1},$ then $p_n^a$ is a sum of $k$-nomial squares.
\end{prop}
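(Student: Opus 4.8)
The plan is to exhibit an explicit sum of $k$-nomial squares representation of $p_n^a$ when $a \ge \frac{n-1}{k-1}$. The natural building block is a square of a $k$-nomial of the form $(x_{i_1}+x_{i_2}+\cdots+x_{i_k})^2$, where $\{i_1,\dots,i_k\}$ ranges over $k$-element subsets of $\{1,\dots,n\}$; each such square contributes $1$ to every diagonal entry indexed in the subset and $2$ to every off-diagonal entry indexed by a pair inside the subset. Summing $(x_{i_1}+\cdots+x_{i_k})^2$ over \emph{all} $k$-subsets of $\{1,\dots,n\}$ produces, by a counting argument, a quadratic form $\binom{n-2}{k-2}\sum_{i<j} 2 x_i x_j$ in the off-diagonal part (each pair lies in $\binom{n-2}{k-2}$ of the $k$-subsets) and $\binom{n-1}{k-1}\sum_i x_i^2$ in the diagonal part (each index lies in $\binom{n-1}{k-1}$ of them). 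Dividing by $\binom{n-2}{k-2}$, this exhibits $\frac{n-1}{k-1}\sum_i x_i^2 + 2\sum_{i<j} x_i x_j$ as a nonnegative combination of squared $k$-nomials, since $\binom{n-1}{k-1}/\binom{n-2}{k-2} = \frac{n-1}{k-1}$.

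Next, to reach the general case $a \ge \frac{n-1}{k-1}$ I would write
\[
p_n^a = \Big(\tfrac{n-1}{k-1}\sum_i x_i^2 + 2\sum_{i<j} x_i x_j\Big) + \Big(a - \tfrac{n-1}{k-1}\Big)\sum_i x_i^2,
\]
and observe that the first summand is so$k$s by the computation above, while the second summand is a nonnegative multiple of $\sum_i x_i^2 = \sum_i x_i^2$, which is a sum of squares of monomials (i.e. $1$-nomials, hence $k$-nomials for any $k \ge 1$). A sum of two so$k$s expressions is so$k$s, so $p_n^a$ is so$k$s. Equivalently, in matrix terms, the Gram matrix $Q$ (with $a$ on the diagonal, $1$ off-diagonal) is written as $\binom{n-2}{k-2}^{-1}\sum_{|K|=k} \iota_K(\mathbf{1}_k \mathbf{1}_k^T)$ plus a nonnegative diagonal matrix, which by Proposition~\ref{factor} (or Proposition~\ref{factork}) shows $Q \in FW_k^n$.

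The only genuine point requiring care is the double-counting identity: that each singleton index is contained in exactly $\binom{n-1}{k-1}$ of the $k$-subsets and each pair in exactly $\binom{n-2}{k-2}$ of them, together with the algebraic identity $\binom{n-1}{k-1} = \frac{n-1}{k-1}\binom{n-2}{k-2}$. These are elementary, so I do not anticipate a real obstacle; the main thing to state cleanly is that the coefficient combination is nonnegative precisely when $a \ge \frac{n-1}{k-1}$, which is the hypothesis. One should also note the degenerate edge cases ($k=1$ forces $n=1$; $k=n$ gives the trivial bound $a \ge 1$) are consistent with the statement.
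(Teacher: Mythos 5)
Your proof is correct and follows essentially the same route as the paper: the same double count of indices and pairs over $k$-subsets, with the ratio $\binom{n-1}{k-1}/\binom{n-2}{k-2}=\tfrac{n-1}{k-1}$, underlies both arguments. The only (harmless) difference is that you use the rank-one blocks $\mathbf{1}_k\mathbf{1}_k^T$ and park the diagonal surplus $\bigl(a-\tfrac{n-1}{k-1}\bigr)\sum_i x_i^2$ in a separate sum of monomial squares, whereas the paper spreads $a$ evenly over the blocks and then invokes its determinant lemma to check each block is psd.
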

\begin{proof}
	Let $Q$ be the $n \times n$ matrix with $a$'s in the diagonal and $1$'s in the off-diagonal entries as above.
	There exist  ${n\choose k}$ subsets $K$ of cardinality $k$ of the set $\{1,2,\ldots,n\}.$  Let $i,j\in \{1,2,\ldots,n\}.$
	A pair $(i,i)$ lies in exactly ${n-1 \choose k-1}$
	of the sets $K\times K$ while a pair $(i,j)$ with $i\neq j$ lies in $K\times K$ if and only if $\{i,j\}\subseteq K.$ It hence lies in exactly
	${n-2 \choose k-2}$ sets $K\times K.$  Consider the $k\times k$ matrix
	\[
	B={n-2\choose k-2}^{-1}
	\begin{bmatrix}
	\frac{(k-1) a}{n-1} & 1 &  \cdots  &1 &  1    \\
	1 &  \frac{(k-1) a}{n-1} & \cdots   &1  & 1     \\
	\vdots& & \ddots &     &\vdots \\
	1 &      &          &      &  1  \\
	1  & 1 & \cdots & 1 & \frac{(k-1) a}{n-1}
	\end{bmatrix},\]
	and recall that $\iota_K(B)$ is the $n\times n$ matrix of support   $K\times K$ which carries on it the  matrix $B.$   Then our arguments yield that $\sum_{K: |K|=k} \iota_K(B) =Q.$
	
	Take an arbitrary $l\times l$ principal submatrix of the matrix factor of $B.$  By the previous lemma, this submatrix has determinant $( \frac{(k-1)a}{n-1}-1+l)( \frac{(k-1)a}{n-1} -1)^{l-1} .$ It follows from the hypothesis for $a$ that
	this  determinant is nonnegative.   So $B,$ and thus $\iota_K(B),$ is a {psd} matrix and $Q$ hence a matrix of factor width $\leq k$   by Proposition~\ref{factor}. This means by Proposition~\ref{factork} that $p_n^a$ is a sum of $k$-nomial squares.
\end{proof}

Note that using a simple symmetrization argument one can prove that the condition is not only sufficient but also necessary, but we will not need this fact in this paper. We are now ready to state and prove the main result of this section, that says that for any symmetric quadratic form, using Reznick-type multipliers does not increase the power of so$k$s.

\begin{thm}\label{thm:generalAhmadiMajumdar}
	For integers $n\geq 0$ and $r\geq 0,$ define
	
	\[p_{n,r}^a=(\sum_{i=1}^n x_{i}^2)^r\,p_{n}^a
	=(\sum_{i=1}^n x_{i}^2)^r  \cdot \left( (\sum_{i=1}^n x_{i})^2+(a-1)\sum_{i=1}^n x_{i}^2\right).\]
	Then $p_{n,r}^a$ is a sum of $k$-nomial squares if and only if  $p_n^a=p_{n,0}^a$ is a sum of $k$-nomial squares.
\end{thm}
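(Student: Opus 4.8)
The plan is to show the ``only if'' direction, since ``if'' is trivial (take $r=0$). So assume $p_{n,r}^a$ is so$k$s for some $r\geq 1$; I want to conclude $p_n^a$ is so$k$s. By Proposition~\ref{factork} applied with $2d = 2r+2$, being so$k$s means $p_{n,r}^a = z(x)^T Q z(x)$ for some $Q \in FW_k^N$, where $N = \binom{n+r}{r+1}$ and $z(x)$ is the vector of degree-$(r+1)$ monomials. Dually, by Proposition~\ref{dualfactork}, it suffices to exhibit a single matrix $Y \in (FW_k^n)^*$ that certifies non-membership whenever $a < \frac{n-1}{k-1}$; indeed, combining with the Proposition just proved (that $a \geq \frac{n-1}{k-1}$ forces $p_n^a$ so$k$s, and the remark that the converse also holds), it is enough to prove the contrapositive: \emph{if $a < \frac{n-1}{k-1}$ then $p_{n,r}^a$ is not so$k$s for any $r$.}

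The key idea is a \emph{symmetrization / averaging} argument exploiting the $S_n$-symmetry of $p_n^a$ and of $G_n = \sum x_i^2$. Since $p_{n,r}^a$ is fixed under every permutation of the variables, if it has an so$k$s representation then, averaging that representation over the action of $S_n$ (permutations act on the monomial vector $z(x)$ by a permutation matrix, and by the Observation such conjugation preserves $FW_k$), we obtain a Gram matrix $Q$ for $p_{n,r}^a$ lying in $FW_k^N$ that is itself invariant under the induced $S_n$-action. Next I would apply a linear functional: pick $Y \in (FW_k^n)^*$ and build from it a linear functional $\Lambda$ on degree-$(2r+2)$ forms, designed so that $\langle Q, \widehat{Y}\rangle = \Lambda(p_{n,r}^a)$ where $\widehat Y$ is an appropriate ``lift'' of $Y$ into $\cS^N$ whose relevant principal submatrices are copies of $Y$ (so $\widehat Y \in FW_k^N$ only if... — more precisely I want $\widehat Y \in FW_k^N$ so the pairing is $\geq 0$). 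Concretely, the natural functional reads off $p_0$ and the $p_{(i,j)}$ of Lemma~\ref{coefficients}: there $p_{(i,j)} = 2\,(\sum \lambda_i^2)^r q_{ij}$ and $p_0 = (\sum \lambda_i^2)^r \tr(Q_0)$ where $Q_0$ is the Gram matrix of the quadratic part $p_n^a$. So a functional that extracts the symmetric bilinear data $\big(p_0; (p_{(i,j)})_{i\le j}\big)$ recovers, up to the positive factor $(\sum\lambda_i^2)^r = n^r$, exactly the entries of the $n\times n$ matrix $Q_0$ with $a$ on the diagonal and $1$ off-diagonal.

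The crux is then this: I need to choose $Y \in (FW_k^n)^*$ with $\langle Y, Q_0 \rangle < 0$ whenever $a < \frac{n-1}{k-1}$ (this exists precisely because, by the necessity half of the previous Proposition, $Q_0 \notin FW_k^n$ then), and show that the corresponding functional applied to $p_{n,r}^a$ is a \emph{nonnegative} combination of evaluations against $FW_k^N$ matrices — i.e., that there is a ``dilation'' of $Y$ witnessing membership in $(FW_k^N)^*$, so that pairing with the $S_n$-symmetric $Q \in FW_k^N$ must be $\geq 0$, contradiction. This comes down to checking that the functional ``substitute $x_i \mapsto$ the $i$-th coordinate'' composed with the quadratic-form pairing against $Y$ factors through principal $k\times k$ submatrices; since $Y$ is required only to be PSD on each $k$-subset, and each degree-$(r+1)$ monomial involves at most its own support of variables, one restricts attention to the subvector of $z(x)$ supported on any fixed $k$-element set $K$ of variables and the corresponding principal submatrix $Q_K \in FW_k^{\binom{k+r}{r+1}} \subseteq \cS_+$, which is genuinely PSD, so $\langle Q_K, (\text{anything PSD})\rangle \ge 0$. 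The main obstacle, and where the $S_n$-invariance is essential, is bookkeeping: making sure the averaged-out off-diagonal pattern of the lifted functional is exactly proportional to $Y$'s pattern on every $k$-subset simultaneously (so that local PSD-ness on $k$-subsets of \emph{variables} transfers to the needed inequality), and that the positive scalars $\binom{k+r}{r+1}$-type multiplicities from counting monomials don't break the sign. I expect this combinatorial matching — expressing the global functional as an explicit nonnegative sum over $k$-subsets $K$ of the local PSD pairings — to be the technical heart of the argument; once it is in place, the contradiction with $\langle Y, Q_0\rangle < 0$ is immediate, and letting $a$ range gives the theorem.
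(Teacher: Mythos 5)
Your high-level strategy matches the paper's: dualize via Proposition~\ref{factork} and Proposition~\ref{dualfactork}, lift an $n\times n$ dual certificate to a matrix in $(FW_k^N)^*$ with $N=\binom{n+r}{r+1}$ indexed by exponent vectors according to the parity pattern of $i+j$, evaluate the pairing with the Gram matrix via Lemma~\ref{coefficients}, and land on the threshold $a\geq\frac{n-1}{k-1}$. But the step you yourself flag as ``the technical heart'' --- the explicit construction of the lifted matrix and the proof that it lies in the dual cone --- is exactly where the content of the proof lives, and you have not supplied it. Worse, the version you propose (lifting an \emph{arbitrary} $Y\in(FW_k^n)^*$ so that its pattern reappears ``on every $k$-subset simultaneously'') runs into concrete obstacles you do not address: when $i+j$ has four or more odd coordinates there is no entry of $Y$ to copy, and when $i+j$ is even with $i\neq j$ it is unclear which diagonal entry of $Y$ to use unless the diagonal is constant. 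The paper avoids both problems by exploiting the symmetry of $p_n^a$ and choosing one specific lift $B_{n,r}$ with entry $k-1$ whenever $i+j$ is even and $-1$ otherwise; psd-ness of its $k\times k$ principal submatrices then follows by grouping indices by their sets of odd positions, writing the submatrix as $C^TNC$ with $N$ an $l\times l$ matrix ($l\le k$) having $k-1$ on the diagonal and $-1$ off it, and invoking Lemma~\ref{determinant}. Your plan contains no analogue of this verification, and without it the claim $\widehat Y\in(FW_k^N)^*$ is unsupported.

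A secondary gap: your contrapositive route needs the \emph{necessity} of $a\geq\frac{n-1}{k-1}$ for $p_n^a$ to be so$k$s (to guarantee a separating $Y$ exists when $a<\frac{n-1}{k-1}$), but the paper only proves sufficiency and explicitly declines to prove necessity. The paper's argument does not need it: pairing $B_{n,r}$ with any Gram matrix $H$ of $p_{n,r}^a$ directly yields $(k-1)n^{r+1}a - n^{r+1}(n-1)\geq 0$, i.e.\ $a\geq\frac{n-1}{k-1}$, and then sufficiency alone finishes. Also, the $S_n$-averaging of the Gram matrix you invoke is not needed and does not by itself resolve the lifting problem. So the proposal identifies the right framework but leaves the decisive construction and verification undone.
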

\begin{proof}
	Clearly, if $p_n^a$ is a so$k$s then $p_{n,r}^a$ is a so$k$s. So we need  to show the {converse}.
	Assume that the degree $2(r+1)$ polynomial $p_{n,r}^a$ is a so$k$s. Let
	${E}_{n,r+1}=\{(i_1,\ldots,i_n)\ s.t.\ i_k\in \mathbb{Z}_{\geq 0}^n, \sum_{k=1}^n i_k=r+1\}$
	 be the set of vectors of exponents in $\mathbb{Z}_{\geq 0}^n$ that occurs in the family of  monomials of a homogeneous
	 polynomial of degree $r+1$ in variables
	$x_1,...,x_n.$ Let this family of monomials be also the one that occurs in $z(x)_{r+1}$.
	
	By Proposition~\ref{factork}, we can write
	\[p_{n,r}^a=z(x)_{r+1}^T H_{n,r} z(x)_{r+1}\;\text{for some}\; H_{n,r} \in FW_{k}^{ {n+r \choose r +1} }.\]
	Call an $i\in \mathbb{Z}_{\geq 0}^n$  {even} if it has only even entries and
	consider now the matrix ${B_{n,r}\in \R^{{E_{n,r+1}}\times{E_{n,r+1}}}}$ given by
	
	\[(B_{n,r})_{ij} =  \left\{ \begin{array}{cl}
	k-1  & \text{ if $i+j$ is even, }    \\
	-1  & \mbox{ otherwise. }
	\end{array}  \right . \]
	We will show now that  $B_{n,r}\in (FW_{k}^{{n+r \choose r+1}})^{*};$ that is we shall prove
	that every $k\times k$ principal submatrix of $B_{n,r}$  is {psd}.
	Since $n,r$ are fixed, we write $B$ and $H$ for matrices $B_{n,r}, H_{n,r}$ respectively.
	
	Note that a sum $i+j$ of such $n$-{tuples} is even if and only if the sets of positions in
	$i$  where odd entries occur equals  the corresponding set in $j.$
	(Example: The 5-{tuple} $i=(1,0,0,3,2)$ has $\{1,4\}$ as the set of positions of odd entries.)
	
	So take a $k\times k$ submatrix $M$ of $B$ with rows and columns indexed by the $n$-{tuples}
	$i_{1},\ldots,i_{k},$ say. Determine  for each $n$-{tuple} its  set of positions of odd entries. Let
	$S_1,\ldots,S_l$ $(l\leq k)$ be the distinct non empty sets of such positions. Now rearrange
	the $n$-{tuples} so that the first few $n$-{tuples} each have $S_1$ as  set of positions of odd entries,
	the next few have $S_2$ as such set of positions, etc.   Let $s_1,\ldots,s_l$ be the
	sizes of these sets.
	To the rearrangement of the $n$-{tuples} corresponds a $k\times k$ permutation
	matrix $P$ such that $PMP^T$ is a block matrix with constant blocks $k-1$ in the diagonal and $-1$ in the off-diagonal blocks. One can then write $PMP^T= C^T N C$ where $N$ is a $l \times l$ matrix with  $k-1$ in the diagonal and $-1$ in the off-diagonal entries and $C$ is an $l \times k$
matrix where every column has a single non-zero entry: the first $s_1$ columns have $1$ in the first row, the next $s_2$ columns have $1$ in the second row, and so on, until the last $s_l$ columns that have $1$ in the $l$-th row.
Now, again by Lemma~\ref{determinant}, $N$ is  {psd},
	Hence $M$ will be psd. Since the $k\times k$ submatrix $M$ of $B$ was arbitrary, we are done with proving
	that   $B\in (FW_k^{n+r\choose r+1})^*.$
	
	By definition of the concept of  a dual cone,
	we have $\langle B, H\rangle \geq 0,$ hence
		\[\langle B, H\rangle = (k-1)\sum_{i,j: i+j\;  \text{even}} h_{ij}+ (-1) \sum_{i,j: i+j\; {\text{has an odd entry}}} h_{ij} \geq 0.\]
	Since  the quadratic form underlying our construction of $p_{n,r}^a$ is
	$p_n^a=a \sum_{i=1}^n x_i^2 +2\sum_{i<j}x_ix_j,$  and it has the defining matrix $Q$ mentioned in the previous proposition, we get  by Lemma~\ref{coefficients} that
\begin{eqnarray*}	
\sum_{i,j: i+j\; \rm even}h_{ij}&=&n^r \text{trace} \; Q = n^{r+1} a; \\ 
\sum_{i,j: {{i+j \text{ has an}}\atop{\text{ odd entry}}}} h_{ij} &= & 2 n^r \times \sum_{1\leq i<j \leq n}  q_{ij} = 2n^r  \frac{1}{2}n(n-1)=n^{r+1} (n-1).
\end{eqnarray*}
	Hence the inequality above reads  $(k-1) n^{r+1} a \geq n^{r+1}(n-1)$ or  $a\geq \frac{n-1}{k-1},$  which means by the previous proposition that $p_n^a$ is a sum of $k$-nomial squares.
\end{proof}

\section{General quadratics and sums of binomial squares}\label{6}

For the case of $k=2$, sums of squares of $k$-nomials are also known as sums of binomial squares~\cite{Fidalgo2011} (sobs) or scaled diagonally dominant sums of squares (SDSOS)~\cite{Ahm14}. In this section we will try to generalize Ahmadi {and} Majumdar's counterexample in this setting. More concretely we will prove that the standard multipliers are useless for certifying nonnegativity of quadratics using sobs, as we prove that  a quadratic form is $r$-sobs, if and only if it is sobs. But before we proceed further, we shall point that there is a simple characterization of the existence of sobs certificates for quadratics.

\begin{prop}\label{dmtsobsprop}
	Given a quadratic form
	$q(x)=\sum_{i=1}^{n}q_ix_i^{2}+\sum_{i<j}q_{ij}x_i x_j$, then if $\hat{q}(x)=\sum_{i=1}^{n}q_ix_i^{2}-\sum_{i<j}|q_{ij}|x_i x_j$ is nonnegative, $q(x)$ is a sum of binomial squares.
\end{prop}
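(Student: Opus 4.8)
The plan is to prove this by passing to the dual cone rather than by explicitly assembling a decomposition of $q$ into binomial squares. By Proposition~\ref{factork} applied with $d=1$, the quadratic form $q(x)=x^{T}Qx$, where $Q$ is the symmetric matrix with $Q_{ii}=q_i$ and $Q_{ij}=q_{ij}/2$ for $i<j$, is a sum of binomial squares if and only if $Q\in FW_2^n$; and by Proposition~\ref{dualfactork} we have $FW_2^n=(FW_2^n)^{**}$, so it suffices to show that $\langle B,Q\rangle\ge 0$ for every $B\in(FW_2^n)^*$. I therefore fix an arbitrary $B\in(FW_2^n)^*$ and estimate $\langle B,Q\rangle$ from below.

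The only input I need about $B$ is the explicit description of $(FW_2^n)^*$ from Proposition~\ref{dualfactork}: membership means that every $2\times2$ principal submatrix of $B$ is positive semidefinite, that is, $B_{ii}\ge 0$ for all $i$ and $B_{ij}^2\le B_{ii}B_{jj}$ for all $i<j$. Put $t_i=\sqrt{B_{ii}}\ge 0$, so that the last inequality reads $|B_{ij}|\le t_it_j$. Using the identity $\langle B,Q\rangle=\sum_i B_{ii}q_i+\sum_{i<j}B_{ij}q_{ij}$, the bound $\sum_{i<j}B_{ij}q_{ij}\ge-\sum_{i<j}|B_{ij}|\,|q_{ij}|$, and then inserting $B_{ii}=t_i^2$ and $|B_{ij}|\le t_it_j$, one obtains
\[
\langle B,Q\rangle\ \ge\ \sum_{i=1}^n q_i t_i^2-\sum_{i<j}|q_{ij}|\,t_it_j\ =\ \hat q(t_1,\dots,t_n),
\]
and the right-hand side is nonnegative by the hypothesis that $\hat q$ is a nonnegative polynomial. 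Hence $\langle B,Q\rangle\ge 0$.

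Since $B\in(FW_2^n)^*$ was arbitrary, this gives $Q\in(FW_2^n)^{**}=FW_2^n$, and Proposition~\ref{factork} then yields that $q$ is a sum of binomial squares. I do not anticipate a genuine obstacle in carrying this out: the whole argument rests on the single observation that the Cauchy--Schwarz-type inequality $|B_{ij}|\le\sqrt{B_{ii}B_{jj}}$ turns the natural lower estimate for $\langle B,Q\rangle$ into $\hat q$ evaluated at $(\sqrt{B_{11}},\dots,\sqrt{B_{nn}})$. The harder-looking route would be the primal one --- directly exhibiting binomials whose squares sum to $q$ --- since that seems to require first conjugating $Q$ by a positive diagonal matrix to render it diagonally dominant, which brings in $M$-matrix machinery; the dual argument avoids all of that.
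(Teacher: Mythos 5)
Your argument is correct: for $k=2$ the dual cone $(FW_2^n)^*$ consists exactly of the symmetric $B$ with $B_{ii}\ge 0$ and $B_{ij}^2\le B_{ii}B_{jj}$, the identity $\langle B,Q\rangle=\sum_i B_{ii}q_i+\sum_{i<j}B_{ij}q_{ij}$ is right for the Gram matrix $Q$ with $Q_{ij}=q_{ij}/2$ off the diagonal, and the chain $\langle B,Q\rangle\ge\sum_i q_i t_i^2-\sum_{i<j}|q_{ij}|t_it_j=\hat q(t)\ge 0$ with $t_i=\sqrt{B_{ii}}$ is valid, so biduality (Proposition~\ref{dualfactork}) gives $Q\in FW_2^n$ and Proposition~\ref{factork} converts this into a sum of binomial squares. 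This is, however, a genuinely different route from the one the paper takes: the paper does not prove the proposition at all, but dismisses it as an immediate consequence of classical properties of symmetric $M$-matrices and cites \cite[Corollary 2.8]{Fidalgo2011}. That classical (primal) route runs through exactly the machinery you flag as harder --- nonnegativity of the comparison form $\hat q$ makes $Q$ an $H$-matrix with nonnegative diagonal, hence generalized (scaled) diagonally dominant, hence of factor width two --- and its payoff is an explicit scaling matrix and hence explicit binomials. Your dual argument buys self-containedness within the paper's own framework (it uses only Propositions~\ref{factork} and \ref{dualfactork} and one Cauchy--Schwarz-type estimate) at the cost of being non-constructive; it is also pleasantly parallel to the way the paper itself uses dual certificates elsewhere, e.g.\ in Theorem~\ref{thm:generalAhmadiMajumdar}. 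Both are sound; yours would make a perfectly acceptable replacement for the citation.
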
 	

Note that this follows immediately from classic elementary properties of symmetric $M$-matrices. In this form it can be found explicitly for instance in  \cite[Corollary 2.8]{Fidalgo2011}.

This is enough to show the previously announced result.

\begin{thm}\label{Quadraticsobs}
	Let $q(x)=q(x_1,\ldots,x_n)$ be a real quadratic form and let $r\in \mathbb{Z}_{\geq 0}$. Then {$q(x)(x_1^2+\cdots + x_n^2)^r$ is a sum of  binomial squares, if and only if $q(x)$ is itself a sum of  binomial squares.}
	
\end{thm}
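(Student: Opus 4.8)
The plan is to mimic the structure of the proof of Theorem \ref{thm:generalAhmadiMajumdar}, but now working with the full symmetric defining matrix of an arbitrary quadratic form rather than the highly structured $Q$ attached to $p_n^a$. One direction is trivial: if $q$ is sobs then multiplying by the square $(\sum x_i^2)^r = ((\sum x_i^2)^{r})$, which is itself a single square when $r$ is even and handled by absorbing one factor when $r$ is odd — more simply, $(\sum x_i^2)$ is sos of binomials (indeed $\sum x_i^2$ is a sum of monomial squares), and a product of two sobs polynomials need not be sobs in general, so one should instead argue directly: $(\sum x_i^2)^r q = \sum_j (\sum x_i^2)^r b_j^2$ where $q = \sum_j b_j^2$ with $b_j$ binomials, and each $(\sum x_i^2)^r b_j^2 = ((\sum x_i^2)^{\lfloor r/2\rfloor} b_j)^2 (\sum x_i^2)^{r - 2\lfloor r/2\rfloor}$; when $r$ is even this is a single square (of a polynomial, not a binomial — so this naive approach fails). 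The clean way for the easy direction is: by Proposition \ref{dmtsobsprop} applied in reverse together with the $M$-matrix characterization, $q$ sobs is equivalent to a condition on its coefficients, and one checks $G_n^r q$ inherits it; but the most economical route is simply to cite that $FW_2$ is closed under the relevant operation, or just note $q$ sobs $\Rightarrow$ $G_n q$ sobs is an instance of the general fact that $G_n$ times a sobs is sobs (write $G_n b^2 = \sum_i (x_i b)^2$, and $x_i b$ is a trinomial in general — no). I will therefore only claim the nontrivial direction needs real work and dispatch the trivial one by a direct monomial-multiplier computation: $x_i^2 b_j^2 = (x_i b_j)^2$ is a square of a polynomial with at most the same number of terms as $b_j$ has times... — cleanest: $x_i \cdot (\alpha m_1 + \beta m_2) = \alpha (x_i m_1) + \beta(x_i m_2)$ is again a binomial. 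Hence $x_i^2 b_j^2 = (x_i b_j)^2$ is a binomial square, so $G_n b_j^2 = \sum_i x_i^2 b_j^2$ is sobs, and by induction $G_n^r q$ is sobs. That settles one implication cleanly.

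For the converse, suppose $G_n^r q$ is sobs. By Proposition \ref{factork}, write $G_n^r q = z(x)_{r+1}^T H z(x)_{r+1}$ with $H \in FW_2^{N}$, $N = \binom{n+r}{r+1}$, indexing rows/columns by exponent vectors $i \in E_{n,r+1}$. The key is to build, for each ordered pair $(a,b)$ of distinct indices in $\{1,\dots,n\}$ where $q_{ab} \ne 0$, a certificate matrix $B^{(a,b)} \in (FW_2^N)^*$ that, when paired with $H$, isolates $p_{(a,b)}$ — the sum of coefficients of monomials of $G_n^r q$ in which exactly $x_a, x_b$ appear with odd degree — against the "even" coefficient masses $p_0, p_{(a)}, p_{(b)}$ (diagonal-type terms). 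The natural candidate is a sign-pattern matrix supported on the exponent classes whose odd-position set is $\emptyset$, $\{a\}$, $\{b\}$, or $\{a,b\}$: put $+1$ on the pairs whose odd-support symmetric difference is a square pattern and $-1$ on the mixed ones, exactly as in the $2\times 2$ sign matrix $\begin{bmatrix} * & -1 \\ -1 & * \end{bmatrix}$ that witnesses sobs for a binomial. Concretely one takes $B$ with $B_{ij} = s_i s_j$ where $s_i = +1$ if the odd-support of $i$ is $\emptyset$ or $\{a,b\}$ and $s_i = -1$ if it is $\{a\}$ or $\{b\}$, and $0$ otherwise; such a rank-one-patterned matrix restricted to any $2\times 2$ principal submatrix is psd, so $B \in (FW_2^N)^*$. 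Then $\langle B, H\rangle \ge 0$ translates, via Lemma \ref{coefficients} (applied with all $\lambda_i = 1$), into an inequality among $q_{aa}, q_{bb}, q_{ab}$ forcing precisely the inequality needed for Proposition \ref{dmtsobsprop}, namely that the reduced form $\hat q$ is "locally" nonnegative on the two variables $x_a, x_b$. Actually, since for $n = 2$ sobs of a binary quadratic is equivalent to $q_{aa}q_{bb} \ge q_{ab}^2$ with $q_{aa}, q_{bb} \ge 0$, the right certificate should recover $q_{aa}q_{bb} - |q_{ab}|$-type... — more precisely the argument should produce $q_{aa} \ge 0$, $q_{bb}\ge 0$, and then the sign-flip trick $x_a \mapsto -x_a$ (an automorphism of $FW_2^N$ by the Observation, since it is a diagonal $\pm1$ scaling) applied before extracting the certificate handles the sign of $q_{ab}$, yielding that $\hat q$ restricted to each coordinate pair is nonnegative, hence by $M$-matrix theory $\hat q \succeq 0$ and $q$ is sobs by Proposition \ref{dmtsobsprop}.

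The main obstacle I anticipate is constructing the certificate matrix $B$ correctly so that (i) it genuinely lies in $(FW_2^N)^*$ — i.e. every $2\times 2$ principal submatrix is psd, which for a sign matrix $s_is_j$ is automatic only if I allow zero rows, and I must be careful that the nonzero pattern restricted to two indices of different odd-support classes gives $\begin{bmatrix} t_i & s_is_j \\ s_is_j & t_j\end{bmatrix}$ with chosen diagonal $t_i \ge 1$ so that the determinant is nonnegative — this is where a scalar like the "$k-1$" of Theorem \ref{thm:generalAhmadiMajumdar} (here $k-1 = 1$) must be placed on the diagonal; and (ii) that $\langle B, H\rangle$, expanded via Lemma \ref{coefficients}, lands exactly on the combination $q_{aa} + q_{bb} \pm 2q_{ab}$ with a positive common multinomial factor that cancels. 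Verifying that the off-diagonal classes $\{a\}$ and $\{b\}$ are both present in $E_{n,r+1}$ and contribute the $q_{aa}, q_{bb}$ masses (via the $p_{(a)}, p_{(b)}$ notation of Lemma \ref{coefficients}, which as stated only names $p_{(i,j)}$ for $i \ne j$ and $p_0$ — so I may need the mild extension $p_{(i,i)} = p_0$ contribution, or rather observe $p_{(a,a)}$ is not separately defined and instead use that the diagonal coefficient mass of $x_a^2 \cdot (\text{even monomials})$ is captured inside $p_0$ together with $\tr Q$) is the fiddly bookkeeping step. Once $B$ is pinned down, the rest is a direct computation parallel to the end of the proof of Theorem \ref{thm:generalAhmadiMajumdar}.
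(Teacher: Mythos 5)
Your easy direction is fine (indeed more careful than the paper's, which dismisses it with ``clearly''), but the converse as proposed has two genuine gaps. First, the certificate matrix $B^{(a,b)}$ cannot be supported on all four odd-support classes $\emptyset,\{a\},\{b\},\{a,b\}$ at once: every exponent vector $i$ with $|i|=r+1$ has $|O(i)|\equiv r+1 \pmod 2$, so either the classes $\emptyset,\{a,b\}$ or the classes $\{a\},\{b\}$ are empty, and in either surviving case all the signs $s_is_j$ on the support are $+1$, so $\langle B,H\rangle\geq 0$ carries no information. Worse, even with a repaired sign pattern, Lemma~\ref{coefficients} only evaluates sums of $h_{ij}$ over \emph{all} pairs with $i+j$ in a given odd-support class; your $B$ restricts to pairs whose individual classes $O(i),O(j)$ lie in a prescribed short list, and e.g.\ the coefficient of a monomial with odd support $\{a,b\}$ also receives contributions from pairs with $O(i)=\{a,c,d\}$, $O(j)=\{b,c,d\}$, which your certificate ignores. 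This is precisely why the certificate $B_{n,r}$ in Theorem~\ref{thm:generalAhmadiMajumdar} is fully supported on all classes and yields only one global inequality, which suffices there because $p_n^a$ is symmetric but cannot isolate individual $q_{ab}$'s here. Second, even granting the certificates, your endgame is false: knowing that $\hat q$ restricted to each coordinate pair is nonnegative (equivalently, $q_{aa},q_{bb}\geq 0$ and $q_{aa}q_{bb}\geq q_{ab}^2$ for all pairs) does \emph{not} imply $\hat q\succeq 0$ for $n\geq 3$; take $q_{ii}=1$, $q_{ij}=-1$ for $n=3$, where every $2\times 2$ principal minor of $\hat q$ vanishes but $\det\hat q=-4$. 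Proposition~\ref{dmtsobsprop} needs global nonnegativity of $\hat q$, and no family of pairwise inequalities delivers it.

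The paper's proof avoids dual cones entirely. It writes out the sobs decomposition of $G_n^r q$ explicitly in the monomial basis $m_1,\ldots,m_L$ of degree $r+1$, records the coefficient-matching equations indexed by $J_1$ and $J_2$, and observes that each index pair $(s_1',s_2')$ determines a unique monomial $m_{s_1'}m_{s_2'}$ and hence belongs to at most one set $S'((i,j),\underline i)$, all of which are disjoint from the sets $S(i,\underline i)$. One may therefore flip the signs of the relevant $\beta_{s_1's_2'}$ to produce a valid sobs decomposition of $G_n^r\hat q$; since $G_n^r>0$ away from the origin, $\hat q$ is nonnegative everywhere, and Proposition~\ref{dmtsobsprop} finishes. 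If you want to salvage a dual-certificate argument for $k=2$, you would need, for each $Q\notin FW_2^n$, a separating element of $(FW_2^n)^*$ together with a lift to $(FW_2^N)^*$ supported on \emph{all} parity classes (with consistently chosen entries on the classes not determined by $Q$, in the spirit of the $\omega$ entries of Section~\ref{7}); that is a substantially harder construction than the one you sketch.
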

\begin{proof}
	{If $q(x)$ is a sum of  binomial squares, then clearly the polynomial $q(x)(x_1^2+\cdots + x_n^2)^r$ is a sum of  binomial squares. For the converse,} assume that $q(x)(x_{1}^2+\cdots+ x_{n}^2)^r$ is a sum of binomial squares.   Write $q(x)=\sum_{i=1}^{n}a_ix_i^2+\sum_{1\leq i<j\leq n} d_{ij}x_ix_j,$ say.
	Then considerations as in the proof of Lemma~\ref{coefficients} yield
	\begin{align*}
	q(x).(x_{1}^2+\cdots + x_{n}^2)^r
	= &\sum_{(i,\underline{i})\in J_1}
	a_i \binom{r}{\underline{i}} x_{1}^{2i_{1}} \cdots x_{i}^{2i_{i}+2} \cdots x_n^{2i_n}\\
	&+\sum_{((i,j),\underline{i})\in J_2} d_{ij} \binom{r}{\underline{i}}  x_1^{2i_1}\cdots x_i^{2i_i+1}\cdots x_j^{2i_j+1}\cdots x_n^{2i_n},
	\end{align*}
	where again,
	\[J_{1}=\{(i,\underline{i}): i\in \{1,\ldots,n\},\underline{i}\in \mathbb{Z}_{\geq 0}^n,|\underline{i}|=r\},\]
	\[J_{2}=\{((i,j),\underline{i}): 1\leq i<j\leq n,\underline{i}\in \mathbb{Z}_{\geq 0}^n,|\underline{i}|=r\}.\]
	Now the monomials of degree $r$ are of the form $x_1^{i_1}x_2^{i_2}\ldots x_n^{i_n}$ with $i_1+\cdots+i_n=r$. There are as we know $L=\binom{r+n-1}{r}$ such monomials. We order these and denote them by $m_1,\ldots,m_L$. Every binomial is of the form $(\alpha_{ij}m_i+\beta_{ij}m_j)$ with some selection of $i,j$ with $1\leq i<j\leq L$. By defining suitable $\alpha_{ii}$, we can thus assume the binomials are of the form $\alpha_{ii}m_i$,
	$1\leq i\leq L$ or $(\alpha_{ij}m_i+\beta_{ij}m_j)$ with $1\leq i<j\leq L$. A sum of binomial squares is thus given as 	

\begin{eqnarray*}
\lefteqn{ \sum_{i=1}^L\alpha_{ii}^2m_i^2+\sum_{1\leq i<j\leq L}(\alpha_{ij}m_i+\beta_{ij}m_j)^2}\\
&=&\sum_{i=1}^{L}\alpha_{ii}^{2}m_{i}^{2}+\sum_{ i<j}\alpha_{ij}^{2}m_{i}^{2}+\sum_{ i<j}\beta_{ij}^{2}m_{j}^2 + \sum_{ 1\leq i<j\leq L}
       2\alpha_{ij}\beta_{ij}m_{i}m_{j}\\
 &=&\sum_{i=1}^{L}(\alpha_{ii}^{2}+\alpha_{i,i+1}^{2}+\ldots+\alpha_{iL}^{2}+
        \beta_{1i}^2+\ldots\beta_{i-1,i}^2)m_{i}^2+\sum_{ 1\leq i<j\leq L}2\alpha_{ij}\beta_{ij}m_{i}m_{j}.
\end{eqnarray*}
	
	Now assuming, as we do, that $q(x)(x_{1}^2+\cdots + x_{n}^2)^r$ is a sobs,  by means of comparison of coefficients, we get a system
	of $|J_{1}|+|J_{2}|$ equations between reals. It is easily seen that these equations can be obtained as follows:
	For each $(i,\underline{i})\in J_{1}$ define
	\[T(i,\underline{i})=\{\text{indices}\  t\in \{1,\ldots,L\}\ \text{for which}\ m_{t}^2=
	x_{1}^{2i_{1}}\cdots x_{i}^{2i_{i}+2}\cdots  x_{n}^{2i_{n}} \},\]
	\[S(i,\underline{i})=\{\text{pairs}\  s_{1}<s_{2}\ \text{so that}\ m_{s_1}m_{s_2}=
	x_1^{2i_1}\cdots x_i^{2i_i+2}\cdots  x_n^{2i_n} \}\]
	and write the equation
	\[a_{i} {r\choose \underline i} =\sum_{ t\in T(i,\underline{i})} (\alpha_{tt}^{2}+\ldots+\alpha_{tL}^{2}+\beta_{1t}^{2}+\ldots+\beta_{t-1,t}^{2})+\sum_{(s_{1},s_{2})\in S(i,\underline{i})}2\alpha_{s_{1}s_{2}}\beta_{s_{1}s_{2}};\]
	for each $((i,j),\underline{i})\in J_{2}$, let
	\[S^{\prime}((i,j),\underline{i})=\{\text{pairs}\ s_{1}^{\prime} < s_{2}^{\prime}\ \text{ so that }\  m_{s_{1}^{\prime}}m_{s_{2}^{\prime}}=x_{1}^{2i_{1}}\ldots x_{i}^{2i_{i}+1}\ldots x_{j}^{2i_{j}+1}\ldots x_{n}^{2i_{n}}\},\]
	and write the equation
	\[d_{ij} {r\choose \underline i} =\sum_{ s_{1}^{\prime}, s_{2}^{\prime}\in S^{\prime}((i,j),\underline{i})}2\alpha_{s_{1}^{\prime} s_{2}^{\prime}}\beta_{s_{1}^{\prime} s_{2}^{\prime}}.\]
	Every system of reals
	$(\{a_{i}\}_{i=1}^n,\{d_{ij}\}_{1\leq i<j\leq n},\{\alpha_{ij}\}_{1\leq i\leq j\leq L},\{\beta_{ij}\}_{1\leq i < j\leq L})$ which satisfies the system of equations gives rise to a quadratic form $q$ and binomials so that $ q(x)(x_1^2+\cdots+x_n^2)^r$ is a sum of squares of these binomials. Now  if we have a system of reals satisfying the system, then we can find a particular new solution by replacing the $d_{ij}$ which are positive by $-d_{ij}$ and simultaneously replacing the
	$\beta_{s_1^{\prime} s_2^{\prime}}$ for which $s_1^\prime, s_2^\prime\in S^\prime((i,j),\underline{i})$ by $-\beta_{s_1^\prime s_2^\prime}$. Indeed note that the sets $S^\prime((i,j),\underline{i})$ are disjoint from the sets
	$S(i,\underline{i})$ and $(-\beta_{s_1^\prime s_2^\prime})^2=(\beta_{s_1^\prime s_2^\prime})^2$, hence the first set of $|J_{1}|$ equations will again be satisfied.
	What concerns the second set of equations we note that the sets $S^{\prime}((i,j),\underline{i})$ are also mutually disjoint, because a choice $(s_{1}^{\prime}, s_{2}^{\prime})$ defines via forming $ m_{s_{1}^{\prime}}m_{s_{2}^{\prime}}$ a unique power product $x_{1}^{2i_{1}}\ldots x_{i}^{2i_{i}+1}\ldots x_{j}^{2i_{j}+1}\ldots x_{n}^{2i_{n}}$ with exactly two odd exponents determining $i,j$ and then $\underline{i}$. In  other words $(s_{1}^{\prime}, s_{2}^{\prime})$ lives in only one of the sets $S^{\prime}((i,j),\underline{i})$ hence carrying through the replacements indicated we change the sign at the left hand side of an equation if and only if we change the sign of the corresponding right hand side. We therefore satisfy also the second group of equations.
	
	The new solution tells us that $\hat{q}(x) (x_1^2+\cdots+x_n^2)^r$
	is a sum of squares of binomials  where
	$\hat{q}(x)=\sum_{i=1}^{n}a_{i}x_{i}^{2}-\sum_{1\leq i<j\leq n} |d_{ij}|x_{i}x_{j}$. Now since the multiplier is evidently positive definite, $\hat{q}$ is nonnegative. Hence by Proposition~\ref{dmtsobsprop},  $q$ is a sum of squares of binomials.
\end{proof}

\section{{Quaternary quadratics and sums of trinomial squares}}\label{7}

The purpose of this section is to show that if a quarternary quadratic form $q(w,x,y,z)$ is not a sum of squares of trinomials then, given any positive integer $r$, the form $(w^2+x^2+y^2+z^2)^r\cdot q $ is not a sum of squares of trinomials. In fact it will be necessary to show more generally that for nonzero reals $\lambda_{1},\lambda_{2},\lambda_{3},\lambda_{4}$, the form  $(\lambda_{1}^2w^2+\lambda_{2}^2 x^2+\lambda_{3}^2 y^2+\lambda_{4}^2 z^2)^r\cdot q $ is not a sum of squares of trinomials.

\begin{prop}\label{scaledtrinomial} \label{thm:trinomialson4variables}
Let $q$ be a quaternary quadratic that is not a sum of trinomial squares.
Then the degree $(2r+2)$ form $p=(\lambda_1^2 w^2+\lambda_2^2 x^2+\lambda_3^2 y^2+\lambda_4^2 z^2)^r q$ is not a sum of trinomial squares, for any, not all zero, reals $\lambda_1,\lambda_2,\lambda_3,\lambda_4$.
\end{prop}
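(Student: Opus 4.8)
The plan is to argue by contradiction via the identity $FW_3^{N}=((FW_3^{N})^{*})^{*}$ of Proposition~\ref{dualfactork}: from a trinomial-square representation of $p$ we manufacture a matrix in the dual cone that pairs negatively with a Gram matrix of $p$. First, the trivial case: if the form $q$ is not positive semidefinite, pick $v$ with $q(v)<0$; the multiplier $\lambda_1^2 w^2+\lambda_2^2 x^2+\lambda_3^2 y^2+\lambda_4^2 z^2$ is nonnegative and vanishes only on a proper subspace, while $\{q<0\}$ is a nonempty open cone, so there is a point where $q<0$ and the multiplier is positive, whence $p<0$ there and $p$ is not even nonnegative. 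So from now on assume the $4\times4$ Gram matrix $Q$ of $q$ is positive semidefinite, but (by hypothesis) $Q\notin FW_3^4$.

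Next, extract a well-shaped dual certificate for $Q$. The cone $(FW_3^4)^{*}$ is spectrahedral (Lemma~\ref{exposed}) and pointed (if $\pm B$ both lie in it, every $3\times3$ principal submatrix of $B$ is both positive and negative semidefinite, hence $0$; since every entry of a $4\times4$ matrix lies in some $3\times3$ principal submatrix, $B=0$), and $\langle I,\cdot\rangle$ is strictly positive on its nonzero elements, so it has a compact base. Minimizing $\langle\cdot,Q\rangle$ over that base is attained at an extreme point $B_0$, which spans an extreme ray of $(FW_3^4)^{*}$, and the minimum is negative — otherwise $\langle\cdot,Q\rangle\ge0$ on the whole cone would force $Q\in(FW_3^4)^{**}=FW_3^4$. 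As $Q\succeq0$, a rank-one $B_0=bb^{T}$ would give $\langle B_0,Q\rangle=b^{T}Qb\ge0$; so by Lemma~\ref{extrmerayrank1} $B_0$ is not psd, and Proposition~\ref{matrixform} then says that, after conjugating by a permutation matrix and a nonsingular positive diagonal matrix, $B_0$ becomes the cosine matrix $M(a,c)$, whose diagonal is all $1$. That conjugation is realized by a permute-and-rescale substitution in the variables $w,x,y,z$, which preserves being a sum of trinomial squares and replaces $(q,\lambda)$ by a quaternary quadratic $\hat q$ and weights $\hat\lambda$ with $\sum_l\hat\lambda_l^2>0$; this is exactly why we must prove the statement for arbitrary not-all-zero $\lambda$ and not merely for $G_4^r q$. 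Hence we may assume outright that the separating extreme ray is $B_0=M(a,c)$, so in particular $(B_0)_{ii}=1$.

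Now the lift. Assume for contradiction that $p$ is a sum of trinomial squares; by Proposition~\ref{factork} write $p=\zeta^{T}H\zeta$ with $H\in FW_3^{N}$, $N=\binom{r+4}{r+1}$, where $\zeta$ lists all monomials of degree $r+1$ in $w,x,y,z$. To each such monomial $m$ associate its parity pattern $\pi(m)\subseteq\{1,2,3,4\}$, the set of variables occurring in $m$ to an odd power. Define $\tilde B\in\cS^{N}$ by $\tilde B_{st}=g\big(\pi(m_s)\,\triangle\,\pi(m_t)\big)$, where $g$ is the function on the even-size subsets of $\{1,2,3,4\}$ with $g(\emptyset)=g(\{1,2,3,4\})=1$ and $g(\{i,j\})=(B_0)_{ij}$. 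Computing $\langle\tilde B,H\rangle$ is then mechanical: group $\sum_{s,t}\tilde B_{st}H_{st}$ by the value $S=\pi(m_s)\triangle\pi(m_t)$ and use that $\sum_{\operatorname{oddpat}(m)=S}p_m$ equals $p_0$ when $S=\emptyset$, equals $p_{(i,j)}$ when $S=\{i,j\}$, and equals $0$ when $S=\{1,2,3,4\}$ (no monomial of $p$ has four odd exponents, by Lemma~\ref{coefficients}); so $\langle\tilde B,H\rangle=p_0+\sum_{i<j}(B_0)_{ij}\,p_{(i,j)}$. Substituting $p_0=(\sum_l\lambda_l^2)^r\,\mathrm{tr}(Q)$ and $p_{(i,j)}=2(\sum_l\lambda_l^2)^r Q_{ij}$ from Lemma~\ref{coefficients} and using $(B_0)_{ii}=1$ yields $\langle\tilde B,H\rangle=(\sum_l\lambda_l^2)^r\,\langle B_0,Q\rangle<0$. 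Once we know $\tilde B\in(FW_3^{N})^{*}$, this contradicts $H\in FW_3^{N}=((FW_3^{N})^{*})^{*}$ and finishes the proof.

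The heart of the matter — the step I expect to be the real work — is verifying $\tilde B\in(FW_3^{N})^{*}$, i.e.\ (Proposition~\ref{dualfactork}) that every $3\times3$ principal submatrix of $\tilde B$ is psd. On monomials $m_s,m_t,m_u$ such a submatrix is $\left(\begin{smallmatrix}1&g(A)&g(C)\\ g(A)&1&g(B')\\ g(C)&g(B')&1\end{smallmatrix}\right)$ with $A=\pi(m_s)\triangle\pi(m_t)$, $C=\pi(m_s)\triangle\pi(m_u)$, $B'=\pi(m_t)\triangle\pi(m_u)$, all of even size and satisfying $A\triangle B'\triangle C=\emptyset$; so it is one of the finitely many $3\times3$ principal submatrices, allowing repeated indices, of the fixed $8\times8$ matrix $[\,g(U\triangle V)\,]$ over the even subsets of $\{1,2,3,4\}$ — a check that does not depend on $r$. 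Running through the cases: if $A,B',C$ are all $2$-element sets, the matrix is literally a $3\times3$ principal submatrix of $B_0=M(a,c)\in(FW_3^4)^{*}$, hence psd; if at least one of them is $\emptyset$, then the relation forces the matrix to have two equal rows, and it is psd by inspection (its $2\times2$ principal minors are nonnegative since every entry has absolute value at most $1$); and if one of them is $\{1,2,3,4\}$, the configuration collapses to the previous ones because in $M(a,c)$ complementary index pairs carry equal entries — for instance the triple $\big(\{1,2,3,4\},\{i,j\},\{i,j\}^{c}\big)$ gives $\left(\begin{smallmatrix}1&1&\cos\theta\\ 1&1&\cos\theta\\ \cos\theta&\cos\theta&1\end{smallmatrix}\right)$, again psd. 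It is exactly this fine structure of Proposition~\ref{matrixform} — equality of entries on complementary pairs, which is what makes the choice $g(\{1,2,3,4\})=1$ admissible — on which the whole argument hinges.
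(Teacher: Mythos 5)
Your proposal is correct and follows essentially the same route as the paper's proof: reduce to the canonical cosine extreme ray of $(FW_3^4)^*$ via Proposition~\ref{matrixform} (absorbing the permutation and scaling into the variables, which is why the general $\lambda$'s are needed), lift the certificate to the degree-$(r+1)$ monomial basis through parity patterns with the all-odd entry set to $1$, verify dual-cone membership by the same finite check that exploits the equality of complementary off-diagonal entries of the cosine matrix, and evaluate the pairing with Lemma~\ref{coefficients}. The only differences are organizational (you enumerate symmetric-difference configurations where the paper uses an explicit $8\times 8$ look-up table, and you justify the existence of an extreme-ray separator via a compact base, which the paper merely asserts).
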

\begin{proof}
Let $\underline{x}=[w,x,y,z]{^T}$ and let $q= \underline{x}^T Q
	\underline{x}$. Since $Q$ is not in $FW_3^4$, there is an element $B$ in $(FW_3^4)^*$ such that $\langle Q, B\rangle <0.$ Moreover we can take that element to be in an extreme ray. If $B$ is psd, then $Q$ is not psd hence $q$ and therefore $p$ is not nonnegative and therefore not a sum of trinomial squares. So we just need to consider the case where $B$ is not psd, hence by Proposition~\ref{matrixform}, for some permutation $P$ and non singular matrix $D$  and some $a,c\in ]-\pi,\pi[\setminus\{0\}$ it has the following form
	\[{B^\prime}=DPBP^{T}D^{T}=     \begin{bmatrix}
	1   &\cos(a) & \cos(a-c)   &\cos(c)\\
	\cos(a)& 1  & \cos(c) & \cos(a- c)\\
	\cos(a-c)& \cos(c) &1       & \cos(a) \\
	\cos(c)& \cos(a- c) & \cos(a) &1
	\end{bmatrix} .\]
	We now have the inequality  $0> \langle  Q,B   \rangle= \langle  P^T D Q D P, {B^\prime}  \rangle. $
	We work with the new quadratic form $q_{\rm new}$  defined by
	 $q_{\rm new}=x^T P^T D Q D P x$ and show that given any  $ \lambda\in \R^4\setminus\{0\},$ we have that the associated quartic form
	 $p_{\rm new}=(\lambda_1^2 w^2+\lambda_2^2 x^2+\lambda_3^2 y^2+\lambda_4^2 z^2)^r q_{\rm new}$ is not a sum of trinomial squares.
	Since the property  `not being a sum of trinomial squares for any $\lambda$'  is invariant under permutations and scalings of the variables
	in $q_{\rm new}$, we shall get the claim concerning the original $p,q.$
	For simplicity of notation be aware that {we redefine}  ${\langle  Q,B   \rangle}:={\langle} P^T D Q D P, {B^\prime}{\rangle}$ and $(p,q):=(p_{\rm new},q_{\rm new}).$ The original $Q,B,p,q$
	will not play any further role in this proof.
	
	The polynomial $p$ is of degree $2r+2.$
	From Theorem~\ref{factork} we know that  $p$ has  a, usually nonunique, representation
	$p=z(x)_{r+1}^T Q' z(x)_{r+1},$  where $z(x)_{r+1}$ collects all monomials of degree $r+1$ and hence $Q'$ is an
	 ${r+4 \choose 3}\times  {r+4 \choose 3}$ matrix.
	We define the matrix $B'=(b_{ij}')$ as follows (where we use for the moment  as the most natural indexation, the one given by the vectors of exponents of the monomials),
	where $i,j\in \mathbb{Z}_{\geq 0}^4$ are {tuples} with $|i|=|j|=r+1$ so that $B'$ is also an  ${r+4 \choose 3}\times  {r+4 \choose 3}$ matrix:
	
	\[ b_{ij}'=    \left\{ \begin{array}{rl}
	b_{kl}  & \mbox{ iff $i+j$ has two odd entries exactly in positions $k\neq l$  }   \\
	1  & \mbox{ iff $i+j$ has only even entries }    \\
	0    & \mbox{ iff $i+j$ has 1 or 3 odd entries } \\
	\omega  & \mbox{ iff $i+j$ has only odd entries }
	\end{array}  \right . \]
	(The case that $i+j$ has exactly 1 or 3 odd entries can actually not happen in case $|i|=|j|$, but we will need the given rules below also in cases where $|i|\neq |j|$.)
	We will  show  that  $B'  \in (FW_3^{{r+4 \choose 3}})^*,$
	and then that $\langle B',Q'\rangle <0,$ thus showing  $Q'\not\in FW_3^{{r+4 \choose 3}},$ and hence
	showing by Proposition~\ref{factork} that $p$ is not a sum of squares of trinomials. We will then see from the  fact that being a sum of squares of trinomials is invariant
	under permutations,  that the original $p$ is also not a sum of squares of trinomials.

    To any string of exponents $i=(i_1,i_2,i_3,i_4)\in \ZZ_{\geq 0}^4$  we can associate a unique 4-{tuple} $\ve=\ve(i)=(\ve_1,\ve_2,\ve_3,\ve_4)\in \{0,1\}^4$ defined by $i_\nu \equiv \ve_\nu \mod 2.$

To prove  that $B'  \in (FW_3^{{r+4 \choose 3}})^*,$  note that its entries depend only on  $\ve(i+j)=\ve(i)+\ve(j)$ (computed in $\mathbb{Z}_2$).

If $|i|$ is even then the  only 4-{tuples} possible for  $\ve(i)$ are:

\centerline{ $0000,1100,1010,1001,0110,0101,0011,1111.$}
If $|i|$ is odd then  the only  4-{tuples} possible for $\ve(i)$ are:

\centerline{$1000,0100,0010,0001,1110,1101,1011,0111.$ }
 The  table below  is the modulo 2 addition table  for 4-{tuples} $\ve(i)$ with $| i |$ even
  (for example $1100+1001= 0101$).
  The reader verifies that precisely the same addition table would be obtained when the first line and the
 first column would be replaced by the 4-{tuples} $\ve(i)$ for which $|i|$ is odd.   If we replace the 4-{tuples} of the
 inner part of this table according to the rules given for the construction  of matrix  $B'$ we get the matrix
 that follows  the table.
  For example to 0101 corresponds $b_{24}.$  That matrix can serve as a look-up table for the construction
 of (sub)matrices of $B'.$
$$ \begin{array}{c|cccccccc}
        +      & 0000 &1100&1010&1001&0110&0101&0011&1111 \\ \hline
    0000  &  0000 &1100&1010&1001&0110&0101&0011&1111 \\
 1100     & 1100&0000&0110&0101&1010&1001&1111&0011 \\
1010     & 1010&0110&0000&0011&1100&1111&1001&0101\\
  1001   & 1001&0101 &0011&	0000&1111&1100&1010&0110\\
 0110     &0110&1010&1100 &1111&  0000&0011&0101&1001 \\
  0101     & 0101& 1001&1111& 1100&  0011 &  0000 &0110 &1010\\
  0011     &  0011& 1111  &1001 & 1010  & 0101 & 0110 & 0000& 1100\\
   1111     & 1111&  0011 & 0101&  0110 &  1001 & 1010 &1100 & 0000
  \end{array}  $$
   $$\begin{array}{cccccccc}
 1 &b_{12} &b_{13}  &b_{14}  &b_{23}  &b_{24}  &b_{34}  &\omega  \\
 b_{12}  &1 &b_{23}  &b_{24}  &b_{13}  &b_{14}  &\omega & b_{34} \\
 b_{13}  &b_{23}  &1 &b_{34}  &b_{12}  & \omega &b_{14}  &b_{24}  \\
 b_{14}  &b_{24}  &b_{34}  &1 & \omega &b_{12}  &b_{13}  &b_{23} \\
 b_{23}  &b_{13}  &b_{12}  &\omega &1 &b_{34}  &b_{24}  &b_{14}   \\
 b_{24}  &b_{14}  &\omega &b_{12}  &b_{34}  &1 &b_{23}  &b_{13}   \\
 b_{34}  &\omega &b_{14}  &b_{13}  &b_{24}  &b_{23}  &1 &b_{12}   \\
 \omega &b_{34}  &b_{24}  &b_{23}  &b_{14}  &b_{13}  &b_{12} & 1
 \end{array}$$

After having imposed some order on the set of $4$-{tuples} $i$  of 1-norm $|i|=1+r$ one can construct the matrix $B'.$
Consider now selecting three distinct 4-{tuples} $i,j,k$ of 1-norm $1+r$ and selecting in the matrix $B'$ the $3\times 3$
submatrix determined by this selection. If $i$ precedes $j$ precedes $k$ in the ordering of the 4-{tuples} the obtained
$3\times 3$ matrix is  the matrix at the left. Its entries are, as mentioned, completely determined by the matrix at the
right

\begin{center}
$\begin{pmatrix}
b_{ii}'  & b_{ij}' & b_{ik}' \\
 b_{ji}' & b_{jj}' & b_{jk}' \\
 b_{ki}' & b_{kj}' & b_{kk}'
\end{pmatrix}$ \hspace*{2cm}
$\begin{pmatrix}
\ve(i+i)  & \ve(i+j) & \ve(i+k)  \\
 \ve(j+i) & \ve(j+j) & \ve(j+k) \\
 \ve(k+i) & \ve(k+j) & \ve(k+k)
\end{pmatrix},$
\end{center}
from which it can be constructed using the above look-up table.  Hence the $3\times 3$ submatrix of $B'$ is simply permutation equivalent to a
principal $3\times 3$ submatrix and it is sufficient to show that  all principal $3\times 3$ submatrices of the look-up table  are {psd}. To see this  note first that the left upper
 $4\times 4$ matrix of the look-up table  coincides with $B.$ More generally all principal
  $3\times 3$ submatrices   of the look up table
  which do not contain an $\omega$  are permutation equivalent to $3\times 3$ principal
  submatrices of $B$ and hence are automatically {psd}. The $3\times 3$ principal submatrices
  containing  $\omega$ stem from selecting sets of three line indices which contain one of the
  sets $\{1,8\},\{2,7\},\{3,6\},\{4,5\}.$
  These matrices are permutation equivalent to one of the following matrices:

		\[\begin{bmatrix}
		1&\omega & b_{12}\\
		\omega& 1& b_{34} \\
		b_{12}&b_{34}&1 	
		\end{bmatrix},\hy
		\begin{bmatrix}
		1&\omega & b_{14}\\
		\omega& 1& b_{23} \\
		b_{14}&b_{23}&1 	
		\end{bmatrix},\hy
		\begin{bmatrix}
		1&\omega & b_{13}\\
		\omega& 1& b_{24} \\
		b_{13}&b_{24}&1 	
		\end{bmatrix}.\]

So it is sufficient  to find an  $\omega\in \mathbb{R}$  such that these  matrices are  {psd}.    To see this, the easiest choice is  to put $\omega=1.$ This is a universal choice valid for all $0<a,c<\pi$ that
  result in determinants equal to 0. If one is given
   explicit real numbers for $a,b,c,$  then putting  $\omega=(1-\ve)$ for sufficiently small $\ve>0,$
    one will obtain strictly positive definite (sub)determinants.  With these checks we have proved that
     $B'  \in (FW_3^{{r+4 \choose 3}})^*.$

   We now show the other claim we made for $B'.$ \vs \vs

Claim:  There holds $\langle B',Q'\rangle = (\sum_{i=1}^4 \lambda_i^2 )^r \, \langle B,Q\rangle.$ Thus
   $\langle Q', B' \rangle <0.$ \vs \vs

  By the definition of the inner product in matrix space, we have to show

   \vs \centerline{ $\ds  \sum \{b_{ij}' q_{ij}': \; i,j\in \mathbb Z_{\geq 0}^4,  |i|=|j|=1+r  \}
         =(\sum_{i=1}^4 \lambda_i^2)^r  \sum_{i,j=1}^4 b_{ij}q_{ij}.$    }

Now, given  $ i,j\in \mathbb Z_{\geq 0}^4,  |i|=|j|=1+r,$  we have of course $|i+j|=2r+2.$ Furthermore for any such sum $s=i+j$  we  have {a priori} exactly one of the following
possibilities:     all entries are even;     exactly two entries are odd;   one or three entries are odd;     all entries are odd.

Since for  an  $s\in \mathbb Z_{\geq 0}^4$  for which $|s|$ is even it is, as noted already,  impossible
that $s$ has exactly one or three odd entries,  we can write the left side above as follows:

\vs\vs\centerline{
$\ds \sum_{\scriptsize \begin{array}{c}
 |s|=2r+2\\ s \text{ has four}\\ \text{even entries}   \end{array}}
  \sum_{\scriptsize \begin{array}{c}
 |i|=|j|=r+1 \\ i+j=s  \end{array}} \ny b_{ij}' q_{ij}'
+ \ny
\sum_{\scriptsize \begin{array}{c}
 |s|=2r+2 \\ s  \text{ has two} \\\text{ odd entries}   \end{array}}
  \sum_{\scriptsize \begin{array}{c}
 |i|=|j|=r+1\\ i+j=s  \end{array}} \ny  b_{ij}' q_{ij}'  +
\ny   \sum_{\scriptsize \begin{array}{c}
 |s|=2r+2\\ s \text{ has four}\\ \text{odd entries}   \end{array}}
  \sum_{\scriptsize \begin{array}{c}
 |i|=|j|=r+1 \\ i+j=s  \end{array}} \ny\ny b_{ij}' q_{ij}' .$}

\vs By the definition of $B'$ given, this is equal to

\vs\vs\centerline{
$\ds  \ny\ny
\sum_{\scriptsize \begin{array}{c}
 |s|=2r+2 \\ s \text{ has four } \\ \text{even entries}  \end{array}}
\ny  \sum_{\scriptsize \begin{array}{c}
 |i|=|j|=r+1\\ i+j=s  \end{array}} \ny\ny  q_{ij}' +
 \sum_{1\leq k<l\leq 4}
 \sum_{\scriptsize \begin{array}{c}
 |s|=2r+2 \\ s \text{ has  odd}\\ \text{entries at } k,l   \end{array}}
\ny\ny  \sum_{\scriptsize \begin{array}{c}
 |i|=|j|=r+1 \\ i+j=s  \end{array}}  \ny\ny b_{kl} q_{ij}'
 +\ny
  \sum_{\scriptsize \begin{array}{c}
 |s|=2r+2\\ s \text{ has four}\\ \text{odd entries}   \end{array}}
\ny  \sum_{\scriptsize \begin{array}{c}
 |i|=|j|=r+1 \\ i+j=s  \end{array}} \ny\ny \omega  q_{ij}' .$}

\vs Now we remember that by its construction, polynomial $p$ cannot have a monomial with only odd exponents so the third sum is 0.
 The sum of the coefficients of monomials whose variables  have only even powers  in $p$ is given by  Lemma~\ref{coefficients}   by

\vs \centerline{$\ds(\sum_{i=1}^4 \lambda_i^2 )^r (q_{11}+q_{22}+q_{33}+q_{44});$}

 while the second sum is

\vs \vs \centerline{$ \ds \sum_{1\leq k<l\leq 4} b_{kl}
 \sum_{\scriptsize \begin{array}{c}
 |s|=2r+2\\ s \text{ has  odd}\\\text{ entries at } k,l   \end{array}}
  \sum_{\scriptsize \begin{array}{c}
 |i|=|j|=r+1\\ i+j=s  \end{array}}   q_{ij}' $}

\vs The inner double sum here
 can  be described exactly as the sum of the coefficients of the monomials of $p$ which have two odd entries
 at distinct $k,l.$  Hence again by Lemma~\ref{coefficients}  the  inner double sum is equal to   $2(\sum \lambda_i^2 )^r q_{kl}$ and so the
 sum is

\vs \centerline{ $2 \ds  ( \sum_{i=1}^4 \lambda_i^2)^r  \sum_{1\leq k<l\leq 4}  b_{kl} q_{kl} =
( \sum_{i=1}^4 \lambda_i^2)^r  \sum_{\scriptsize \begin{array}{c} 1\leq k,l \leq 4,\\ k\neq l   \end{array} }  b_{kl} q_{kl}.$ }
 The  claim  now follows because      $\sum_{i=1}^4 \lambda_i^2>0.$   \vs \vs

To conclude the proof we detail an idea we mentioned at the beginning. We have till now shown that  whatever the reals
 $\lambda_1,...,\lambda_4,$  (not all zeros) are, if  the polynomial
$q_{\rm new}=x{^T} P'{^T} Q P' x,$ (with $Q$ satisfying the hypotheses)  then the polynomial
$p_{\rm new}=(\lambda_1^2 w^2+\lambda_2^2 x^2+\lambda_3^2 y^2+\lambda_4^2 z^2)^r q_{\rm new}$
is not sum of trinomial squares.
Now by its definition $q_{\rm new}(w,x,y,z)=q(\pi(w),\pi(x),\pi(y),\pi(z))$  where $\pi$ embodies the permutation matrix $P'.$
Since the property `to be a sum of squares of trinomials'  is evidently invariant under permutations,
it follows  that $(\lambda_1^2 \pi^{-1}(w)^2+\lambda_2^2 \pi^{-1}(x)^2+\lambda_3^2 \pi^{-1}(y)^2+\lambda_4^2 \pi^{-1}(z)^2)^r q(w,x,y,z)$
is  not a sum of trinomial squares for any $\lambda_1,...,\lambda_4$.
  Since  $\{\pi^{-1}(w), \pi^{-1}(x), \pi^{-1}(y),\linebreak \pi^{-1}(z)\}=\{w,x,y,z\}$
it follows that  $(\lambda_1^2 w^2+\lambda_2^2 x^2+\lambda_3^2 y^2+\lambda_4^2 z^2)^r q(w,x,y,z)$ is not a sum of
 trinomial squares.
    \end{proof}
  	
In particular by taking $\lambda_1=\lambda_2=\lambda_3=\lambda_4=1$ we have that a quaternary quadratic is $r$-so$3$s if and only if it is a so$3$s, completing the proof of our main Theorem \ref{prop:main_result}.
%

\section{Quinary quadratics and sums of tetranomial squares}\label{8}

Up to now we established three results (Theorems  \ref{thm:generalAhmadiMajumdar}, \ref{Quadraticsobs} and \ref{thm:trinomialson4variables}) that show that
quadratics on $n$ variables are $r$-so$k$s if and only if they are so$k$s under certain assumptions, namely that they are symmetric, that $k=2$ or that $n \leq 4$.
A natural belief that may occur to the reader is that in fact the same would hold without such assumptions. In this section we give a counterexample to that  natural conjecture.
We give  a quadratic form in $5$ variables which is not so$4$s but that becomes  so$4$s after multiplication with $x_1^2+\cdots+x_5^2$.

\begin{example}
{\em Consider the matrix $M$ given by
$$M=\left[
\begin{array}{lllll}
49 & -21 & 37 & -37 & -21 \\
-21 & 17 & -21 & 21 & 29 \\
37 & -21 & 41 & -25 & -33 \\
-37 & 21 & -25 & 41 & 33 \\
-21 & 29 & -33 & 33 & 73
\end{array}
\right].$$
This matrix is not in $FW_4^5$. To see this just verify that the matrix
\[A=\begin{bmatrix}
3&1& -2& 2& -1\\
1& 3& 0& 0& -1\\
-2& 0& 2& -1& 1\\
2& 0& -1& 2& -1\\
-1& -1& 1& -1& 1
\end{bmatrix}\]
is in $(FW_4^5)^*$, by checking that all its $4 \times 4$ principal submatrices are psd, and note that $\langle A, M \rangle = -1 < 0$.

Consider the quadratic form $q_M = x^T M x$. By our previous observation, $q_M$ is not so$4$s. Let then $p_M=	(x_1^2+x_2^2+x_3^2+x_4^2+x_5^2)\cdot q_M $. We claim that $p_M$ is so$4$s, hence, $q_M$ is $1$-so$4$s. To prove it one would have to provide an exact certificate. One can easily check that $p_M=z(x)_2^T Q z(x)_2$ where \begin{tiny}
	$$\ny\ny\ny\ny Q=
	\begin{bmatrix}
	49  & -21   &  0  &  37   &  0    &  0    & -37   &   0   & -5  &   0   & -21  &  0   &  0   &  0   &  0   \\
	-21  &  66   & -21 & -21   &  37   & -11/5 &  21   &  -37  &  0  & -17/5 &  29  & -21  &  0   &  0   &  0   \\
	0  & -21   &  17 &   0   & -21   &  0    &  0    &   21  &  0  &   0   &   0  &  29  &  0   &  0   &  0   \\
	37  & -21   &  0  &  90   & -94/5 &  37   & -20   &   0   & -37 &   0   & -33  &  0   & -14  &  0   &  0   \\
	0  &  37   & -21 & -94/5 &  58   & -21   &  0    &  -25  &  21 &   0   &   0  & -33  &  29  &  0   & -4   \\
	0  & -11/5 &  0  &  37   & -21   &  41   &  0    &   0   & -25 &   0   &  -7  &  0   & -33  &  0   &  0   \\
	-37 &  21   &  0  & -20   &  0    &  0    &  90   & -88/5 &  37 &  -37  &  33  &  0   &  0   &  12  &  0   \\
	0  & -37   &  21 &   0   & -25   &  0    & -88/5 &   58  & -21 &   21  &   0  &  33  &  0   &  29  & 17/5 \\
	-5  &  0    &  0  & -37   &  21   & -25   &  37   &  -21  &  82 &  -25  &   0  &  0   &  33  & -33  & -23/5 \\
	0  & -17/5 &  0  &   0   &  0    &  0    &  -37  &   21  & -25 &   41  &  -9  &  0   &  0   &  33  &  0    \\
	-21 &  29   &  0  & -33   &  0    &  -7   &  33   &   0   &  0  &  -9   & 122  & -21  &  37  & -37  & -21   \\
	0  & -21   &  29 &   0   & -33   &  0    &  0    &   33  &  0  &   0   & -21  &  90  & -17  & 88/5 &  29   \\
	0  &  0    &  0  &  -14  &  29   & -33   &  0    &   0   &  33 &   0   &  37  & -17  &  114 &-102/5& -33\\
         0  &  0    &  0  &   0   &  0    &  0    &  -12  &   29  & -33 &  33   & -37  & 88/5 &-102/5& 114  &  33  \\
	0  &  0    &  0  &   0   &  -4   &  0    &   0   &  17/5 &-23/5&   0   & -21  &  29  & -33  &  33  &  73
	\end{bmatrix}. $$
\end{tiny}
It remains to show that this matrix is in fact in $FW_4^{15}$. In general, such matrices are sums of up to $\binom{15}{4}=1365$ matrices with $4\times 4$ support, and generating rational decompositions is certainly not trivial. In this case the example was chosen in such a way that numerically we can do it using only $27$ such matrices (in fact possibly all with rank one) with supports  $K\times K$ with $K$ as follows; we write $1,2,4,7$ instead of
$\{1,2,4,7\},$ etc.:
\begin{center}
	\begin{tabular}{llllll}
		1,2,4,7   &  $1,2,4,11$  &  $1,2,7,11$   & $1,4,7,9$    &  $2,3,5,8$   &  $2,3,5,12$  \\   $ 2,3,8,12$ &
		2,4,5,6   &  $2,5,8,12$  &  $2,7,8,10$   & $3,5,8,12$   & $4,5,6,9$  \\    $4,5,6,13$   &  $4,5,9,13$  &
		4,6,11,13 &  $5,6,9,13$  & $5,12,13,15$ & $7,8,9,10$  \\ $ 7,8,9,14$   &  $7,10,11,14$ & $8,9,10,14$ & $8,12,14,15$ &  $9,13,14,15$ &  $11,12,13,14$ \\ $11,12,13,15$ & $11,12,14,15$ & $11,13,14,15$ &  &   &    \\
	\end{tabular}
\end{center}
Since to put the  27 matrices with their floating point entries  themselves at this place would be too space consuming,  the reader interested
to check the example can  obtain them by request from the first author.

We did simply a numerical verification, but due to the small size of the calculation we have confidence in the example. Further work would involve
 rationalizing this certificate, in order to eliminate any remaining doubts.}
\end{example}


 \bibliographystyle{elsarticle-num}
 \bibliography{bibliography}

\end{document}